\documentclass[11pt]{article}

\usepackage[margin=1in]{geometry}
\usepackage[T1]{fontenc}
\usepackage[utf8]{inputenc}
\usepackage{lmodern}
\usepackage{amsmath}
\usepackage{amssymb}
\usepackage{amsthm}
\usepackage{mathtools}
\usepackage{graphicx}
\usepackage{booktabs}
\usepackage{multirow}
\usepackage{array}
\usepackage{makecell}
\usepackage{float}
\usepackage{caption}
\usepackage{subcaption}
\usepackage{algorithm}
\usepackage{algpseudocode}
\usepackage{xcolor}
\usepackage{url}
\usepackage[colorlinks=true,
            linkcolor=blue,
            citecolor=blue,
            urlcolor=blue]{hyperref}
\usepackage[nameinlink,capitalize]{cleveref}
\usepackage{listings}

\definecolor{codegray}{gray}{0.95}
\definecolor{codegreen}{rgb}{0.0,0.45,0.0}
\definecolor{codepurple}{rgb}{0.45,0.0,0.55}

\definecolor{revcolor}{RGB}{180,0,0} 

\lstdefinestyle{pythonstyle}{
    language=Python,
    backgroundcolor=\color{codegray},
    basicstyle=\ttfamily\small,
    keywordstyle=\color{blue},
    commentstyle=\color{codegreen},
    stringstyle=\color{codepurple},
    numbers=left,
    numberstyle=\tiny,
    stepnumber=1,
    numbersep=8pt,
    showstringspaces=false,
    breaklines=true,
    frame=single,
    tabsize=4,
    captionpos=b
}

\usepackage{tikz}
\usepackage{pgfplots}
\pgfplotsset{compat=1.18}
\newtheorem{theorem}{Theorem}
\newtheorem{proposition}{Proposition}
\newtheorem{lemma}{Lemma}
\newtheorem{corollary}{Corollary}

\theoremstyle{definition}

\newtheorem{problem}{Problem}

\title{Strong NP-Hardness of Rank-2 Nonnegative Matrix Factorization}

\author{Arnaud Vandaele\thanks{Department of Mathematics and Operational Research, University of Mons, Mons, Belgium. \\ Email: arnaud.vandaele@umons.ac.be.}}
\date{}

\begin{document}
\maketitle

\begin{abstract}
Nonnegative matrix factorization (NMF) approximates a nonnegative matrix by the product of two nonnegative matrices with a prescribed inner dimension~$r$.
The existence of an exact NMF can be decided in polynomial time for every fixed~$r$, and a best rank-$1$ NMF in the Frobenius norm can be obtained from a truncated singular value decomposition.
We prove that computing a best rank-$2$ NMF in the Frobenius norm is strongly NP-hard.
Our proof uses a polynomial-time reduction from \textsc{Clique} that produces symmetric positive-definite, completely positive
matrices with polynomially bounded integer entries.
The same reduction also establishes strong NP-hardness of rank-$2$ symmetric NMF.
\end{abstract}

\section{Introduction}\label{sec:intro}
 
Given a nonnegative matrix $X\in\mathbb{R}^{m\times n}_{\geq0}$ and an integer $r\geq1$, NMF seeks nonnegative factors $W\in\mathbb{R}^{m\times r}_{\geq0}$ and $H\in\mathbb{R}^{r\times n}_{\geq0}$ whose product approximates~$X$.
Introduced by Paatero and Tapper~\cite{paatero1994positive} and popularized by Lee and Seung~\cite{lee1999learning}, NMF has become a standard tool for extracting interpretable, parts-based representations of nonnegative data; see the monograph~\cite{gillis2020nonnegative} and the references therein.
Measuring the error in the Frobenius norm leads to the optimization problem
\begin{equation}\label{eq:nmf}
\min_{W\in\mathbb{R}^{m\times r},\,H\in\mathbb{R}^{r\times n}}\ \|X-WH\|_F^2\quad\text{s.t.}\quad W\geq0,\ H\geq0.
\end{equation}
Without the nonnegativity constraints, \eqref{eq:nmf} is solved by a truncated singular value decomposition (SVD).
Indeed, by the Eckart--Young--Mirsky theorem, the truncated SVD provides a best rank-$r$ approximation in every unitarily invariant norm~\cite{eckart1936approximation,mirsky1960symmetric}, and it can be computed by iterative methods at a cost of $O(mnr)$ operations per iteration~\cite{golub1965calculating}.
With the nonnegativity constraints, the truncated SVD is in general infeasible, since its factors may have negative entries, and \eqref{eq:nmf} becomes a nonconvex problem.
Standard algorithms, such as multiplicative updates and block coordinate descent methods based on alternating nonnegative least squares, are local methods: at best, they guarantee convergence to stationary points~\cite{gillis2020nonnegative}.
This raises a natural algorithmic question:
\begin{center}
\textit{for which values of~$r$ can a global minimizer of~\eqref{eq:nmf} be computed in polynomial time?}
\end{center}

The case $r=2$ is of particular interest.
Rank-$2$ NMF is the building block of hierarchical clustering methods that recursively split the data into two clusters, for instance, in document clustering~\cite{kuang2013fast}, hyperspectral imaging~\cite{gillis2014hierarchical}, and the characterization of brain-tumor tissues~\cite{li2013hierarchical,sauwen2015hierarchical}.
For $r=2$, the subproblems of alternating schemes are particularly simple: when one factor is fixed, the problem decouples into nonnegative least squares problems in two variables, which can be solved exactly~\cite{kuang2013fast}.
More recently, Lindy, Noferini, and Van Dooren~\cite{lindy2025rank} studied the structure of rank-$2$ nonnegative factorizations and proposed an SVD-based initialization for alternating nonnegative least squares.
They also noted that whether computing a best rank-two NMF is NP-hard was, to their knowledge, an open problem.
In this paper, we prove that this problem is strongly NP-hard.
 
\subsection{Known complexity results}\label{subsec:nutshell}
 
The complexity of NMF depends on three features: whether the factorization is exact or approximate, whether the inner dimension~$r$ is fixed or part of the input, and whether the input matrix is nonnegative.
These distinctions lead to the three problems defined formally below: exact NMF (\textsc{ExactNMF}), approximate NMF in the Frobenius norm (\textsc{FroNMF}), and its extension to input matrices of arbitrary signs, called nonnegative factorization (\textsc{NF})~\cite{gillis2008nonnegative}.
 
\begin{problem}[\textsc{ExactNMF}]
Given a matrix $X\in\mathbb{R}_{\geq 0}^{m\times n}$ and an integer $r\leq \min(m,n)$, determine whether there exist matrices $W\in\mathbb{R}_{\geq 0}^{m\times r}$ and $H\in\mathbb{R}_{\geq 0}^{r\times n}$ such that $X=WH$.
\end{problem}

\begin{problem}[\textsc{FroNMF}]
Given a matrix $X\in\mathbb{R}_{\geq 0}^{m\times n}$ and an integer $r\leq \min(m,n)$, solve
\[
\min_{W\in\mathbb{R}^{m\times r}, H\in\mathbb{R}^{r\times n}} \|X-WH\|_F^2 \quad \text{s.t.}\quad  W\geq 0, ~H\geq 0.
\]
\end{problem}

\begin{problem}[\textsc{NF}]
Given a matrix $M\in\mathbb{R}^{m\times n}$ and an integer $r\leq \min(m,n)$, solve
\[
\min_{W\in\mathbb{R}^{m\times r}, H\in\mathbb{R}^{r\times n}} \|M-WH\|_F^2 \quad \text{s.t.}\quad  W\geq 0, ~H\geq 0.
\]
\end{problem}
 
For a fixed integer~$r$, we denote by \textsc{Rank-$r$-NMF} and \textsc{Rank-$r$-NF} the restrictions of \textsc{FroNMF} and \textsc{NF} to instances with inner dimension~$r$.
 
\paragraph{Exact NMF.}
Vavasis~\cite{vavasis2010complexity} proved that \textsc{ExactNMF} is NP-hard when $r$ is part of the input.
In contrast, Arora et al.~\cite{arora2012computing} showed that it can be solved in polynomial time for a fixed $r$ by providing an algorithm with a running time of $(mn)^{O(r^2 2^r)}$, which was later improved to $(mn)^{O(r^2)}$ by Moitra~\cite{moitra2016almost}.
 
\paragraph{Approximate NMF.}
When $r$ is part of the input, \textsc{FroNMF} is NP-hard as an immediate consequence of the hardness of \textsc{ExactNMF}: the optimal approximation error is zero if and only if an exact nonnegative factorization of inner dimension $r$ exists.
This argument, however, gives no information when $r$ is fixed.

When $r$ is fixed, Arora et al.~\cite{arora2012computing} also obtained an approximation guarantee, assuming that a sufficiently accurate nonnegative factorization exists.
Specifically, if $X$ admits nonnegative factors of inner dimension $r$ with $\|X-WH\|_F\leq\varepsilon\|X\|_F$, their algorithm computes nonnegative factors $\widetilde W,\widetilde H$ of the same inner dimension such that
\[
\|X-\widetilde W\widetilde H\|_F \leq O\left(r^{1/4}\sqrt{\varepsilon}\right)\|X\|_F
\]
in time $2^{\operatorname{poly}(r\log(1/\varepsilon))}\operatorname{poly}(m,n)$.
Thus, the running time is polynomial in the matrix dimensions for fixed $r$ and $\varepsilon$.
This guarantee provides an upper bound on the reconstruction error but does not ensure that the returned factors attain the smallest possible error for a given inner dimension.

Computing a global optimum is nevertheless possible in polynomial time for $r=1$.
Indeed, by the Perron-Frobenius theorem, a nonnegative matrix admits a pair of nonnegative leading singular vectors, so the truncated SVD already provides an optimal nonnegative rank-one approximation, and \textsc{Rank-1-NMF} can be solved in polynomial time.
For $r=2$, the same argument applies whenever the best rank-$2$ approximation $X_2$ given by the truncated SVD is nonnegative.
Indeed, every nonnegative matrix of rank at most two admits a nonnegative factorization of inner dimension two~\cite{thomas1974rank}, which can be computed efficiently, so that $X_2$ leads to an optimal NMF. 

\paragraph{Nonnegative factorization.}
The situation changes if the matrix to be approximated is allowed to have negative entries.
Indeed, Gillis and Glineur~\cite{gillis2008nonnegative,gillis2014continuous} showed, by a reduction from the maximum-edge biclique problem, that \textsc{Rank-1-NF} is NP-hard.
Thus, while \textsc{Rank-1-NMF} is polynomial-time solvable,
allowing arbitrary signs in the input matrix is sufficient to make the corresponding rank-$1$ approximation problem NP-hard.
Belachew and Gillis~\cite{belachew2017solving} showed that the symmetric variant of this problem, which seeks a best approximation of a symmetric matrix by a matrix $\lambda xx^\top$ with $\lambda\geq0$ and $x\geq0$, encodes the clique number of a graph, through a continuous characterization in the spirit of the Motzkin--Straus theorem~\cite{motzkin1965maxima}.
 
\subsection{Contributions}\label{subsec:contribution}
The results above leave rank-$2$ NMF as the first nontrivial fixed-rank case.
Moreover, if the best unconstrained rank-$2$ approximation $X_2$ is entrywise nonnegative, then $X_2$ is also optimal for \textsc{Rank-2-NMF}.
The remaining cases are those in which $X_2$ contains negative entries.
Our main result is the following.

\begin{theorem}\label{thm:r2hardness}
\textsc{Rank-2-NMF} is strongly NP-hard.
\end{theorem}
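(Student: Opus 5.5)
We reduce from \textsc{Clique}: given a graph $G=(V,E)$ with $|V|=n$ and an integer $k$, decide whether $G$ contains a clique of size $k$. The construction should produce a symmetric positive-definite, completely positive matrix $X$ with polynomially bounded integer entries. We then compute a threshold $\tau(n,k)$ such that the optimal rank-$2$ NMF error satisfies $\min\|X-WH\|_F^2\le\tau$ if and only if $G$ has a $k$-clique. Strong NP-hardness then follows because the entries are integers of size polynomial in $n$. The idea is to build on the Motzkin--Straus connection already exploited by Belachew and Gillis for symmetric rank-$1$ NF. There the difficulty came from negative entries; here nonnegativity of the input has to be recovered by shifting and by the second rank-one term. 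Concretely, take $X=\begin{pmatrix} A & B\\ B^\top & C\end{pmatrix}$, where $A=\alpha(I+A_G)+\beta J$ or a variant built from the adjacency matrix $A_G$. The second block is a large "anchor" block, weighted by a polynomial parameter $M$, whose role is to force one of the two rank-one terms $w_1h_1$ to capture it almost exactly.

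\textbf{Step 1 (symmetrization and structure of optimal factors).} Since $X$ is symmetric positive definite, we first argue that an optimal rank-$2$ NMF can be taken to satisfy $WH=w_1w_1^\top+w_2w_2^\top$. Alternatively, we bound the unsymmetric optimum from below by the symmetric one; this has to be checked, possibly via the anchor block. This step also yields the companion result on symmetric NMF.

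\textbf{Step 2 (decoupling).} Using the anchor, we show that in any near-optimal solution one rank-one term is essentially determined, namely it fits the anchor block. The second term is then supported, up to small error, on the $A$-block. Fitting it reduces to a rank-$1$ problem of the form $\min_{x\ge0}\|\tilde A-xx^\top\|_F^2$, where $\tilde A=A-(\text{anchor contribution})$ has negative entries exactly on the non-edges of $G$. This turns rank-$2$ NMF with nonnegative input into symmetric rank-$1$ NF with signed input.

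\textbf{Step 3 (combinatorial characterization).} We show that the optimal $x$ is supported on a clique and has equal entries there. The signs of $\tilde A$ are chosen so that including a non-edge pair costs more than it gains. The optimum is then an explicit decreasing function of the maximum clique size $\omega(G)$, and evaluating it at $k$ gives $\tau$.

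\textbf{Main obstacle.} The critical step is Step 2. We must prove rigorously that the second rank-one term cannot "share" the anchor, and that leakage of $w_1$ into the $A$-block does not change the error by more than the gap between consecutive clique values. This requires quantitative perturbation bounds with explicit polynomial choices of $M,\alpha,\beta$. The first thing to try is to choose the anchor block as $M\,ee^\top$ plus a diagonal shift, which keeps $X$ positive definite and completely positive. One can then use the first-order optimality (KKT) conditions of the two-variable NNLS subproblems to pin down $w_1$ up to an $O(1/M)$ error. The gap between consecutive clique values is $\Omega(1/\mathrm{poly}(n))$, so a polynomially large $M$ should suffice.
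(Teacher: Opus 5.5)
\textbf{There is a genuine gap.} It sits in Step~2, which you leave open, and the mechanism you propose for it cannot work. Near-optimal factorizations need not decouple, because the anchor can be \emph{shared} between the two rank-one terms at no cost.

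Here is a concrete construction. Take an anchor row and column $M\mathbf{1}$ with corner $\alpha M^2$; the paper's instance is of this form with $M=d$. Let $\mathcal{C}$ be a clique with complement $\bar{\mathcal{C}}$, and set $w_1=(\gamma\mathbf{1}_{\bar{\mathcal{C}}},\,b_1)$ and $w_2=(\sigma\mathbf{1}_{\mathcal{C}},\,b_2)$, with $\gamma b_1=\sigma b_2=M$ and $b_1^2+b_2^2=\alpha M^2$, i.e.\ $\gamma^{-2}+\sigma^{-2}=\alpha$. Then $w_1w_1^\top+w_2w_2^\top$ has the following properties:
\begin{itemize}
\item it reproduces the last row, column and corner exactly;
\item it puts $\sigma^2$ on the clique block and $\gamma^2$ on the non-clique block;
\item it puts zeros on the clique/non-clique block, so the anchor-induced value there disappears, including on the non-edges.
\end{itemize}
In this configuration the second term carries anchor entries of full size $M$. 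Moreover, the leading block of this family depends only on ratios, hence not on $M$, so no polynomial choice of $M$ suppresses it.

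This is not hypothetical. In the paper's $N=6$ example, taking $\sigma^2=(d-1+J)/J$, this split beats the decoupled solution $vv^\top+\mu(C)zz^\top$ by about $13$. So ``one term fits the anchor, the other lives on the $A$-block'' is false for near-optimal solutions. No argument, KKT or otherwise, can place every near-optimal $w_1$ within $O(1/M)$ of the anchor vector. The benefit of sharing does not vanish as $M\to\infty$, so a gap of order $1/\mathrm{poly}(n)$ between consecutive clique values cannot absorb it. Your Step~1, symmetrizing a rank-$2$ optimum, is also unproved, and it is not needed.

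\textbf{The missing idea} is to stop trying to separate the factors, bound what sharing can gain, and then make the gap larger than that bound.
\begin{itemize}
\item The paper writes the leading block of $Y=WH$ as $D=pq^\top/\delta+S$, where $p,q,\delta$ are the last column, last row and corner of $Y$. The Schur complement $S$ has rank at most one, and nonnegativity of $Y$ gives $S\ge -pq^\top/\delta$, however the anchor is split.
\item Because the residual $C$ is positive definite, Cauchy--Schwarz shows that such a signed rank-one $S$ improves on $\|C\|_F^2-\mu(C)^2$ by at most $2\langle C^-,pq^\top/\delta\rangle\le 2N^2+2N\|R\|_F$.
\item The deviation $R=\tfrac1\alpha\mathbf{1}\mathbf{1}^\top-pq^\top/\delta$ satisfies $\|R\|_F\le 4\sqrt{N\beta}/d$, where $\beta$ is the squared error on the last row and column.
\item Completing the square in $\beta$ caps the total improvement at $20N^2$, independent of $d$.
\end{itemize}
The gap is then made large, rather than the improvement small. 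The leading block carries the diagonal $d$, so $\mu(C)=d-1+\omega(G)/(N+1)$. Consecutive clique sizes therefore differ in squared error by more than $2(d-1)/(N+1)$, which exceeds $20N^2$ for $d=20N^3$. The same large diagonal makes $C$ diagonally dominant and hence positive definite. Your Step~3 also needs this to write the best nonnegative rank-one error as $\|C\|_F^2-\mu(C)^2$.
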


Our proof is based on a polynomial-time reduction from \textsc{Clique}.
The reduction encodes the clique number in the optimal nonnegative rank-one approximation error of a residual matrix.
A quantitative perturbation argument then shows that allowing both rank-one terms to vary cannot overcome the error gap between consecutive clique sizes.

Theorem~\ref{thm:r2hardness} contrasts with the exact case: \textsc{ExactNMF} is polynomial-time solvable for every fixed inner dimension, whereas computing a globally optimal NMF is already strongly NP-hard for inner dimension two.
Table~\ref{tab:nmf-complexity} summarizes the complexity results.
 
\begin{table}[ht]
\centering
\renewcommand{\arraystretch}{1.3}
\begin{tabular}{l|c|c|c}
\hline
 & \textsc{ExactNMF} & \textsc{FroNMF} & \textsc{NF}\\
\hline
$r$ fixed
 & polynomial~\cite{arora2012computing,moitra2016almost}
 & \makecell{polynomial for $r=1$\\ strongly NP-hard for $r=2$\\ (Theorem~\ref{thm:r2hardness})}
 & \makecell{NP-hard for $r=1$\\ \cite{gillis2008nonnegative,gillis2014continuous}}\\
$r$ part of the input
 & NP-hard~\cite{vavasis2010complexity} & NP-hard & NP-hard\\
\hline
\end{tabular}
\caption{Complexity landscape of \textsc{ExactNMF}, \textsc{FroNMF}, and \textsc{NF}.}
\label{tab:nmf-complexity}
\end{table}
 
Consequently, unless $\mathrm{P}=\mathrm{NP}$, no polynomial-time algorithm can guarantee a globally optimal rank-two NMF for every input, even though the subproblems of alternating nonnegative least squares can be solved exactly.
The hardness persists under strong restrictions on the input matrices.
Indeed, the matrices produced by the reduction are symmetric and have nonnegative integer entries bounded by a polynomial in the number of vertices of the graph.
Moreover, they are completely positive (Proposition~\ref{prop:complete-positivity}): they admit an exact symmetric factorization $X=WW^\top$ with $W\geq0$, but only with an inner dimension larger than two.
In addition, the factors constructed for yes-instances satisfy $H=W^\top$.
As a consequence, the same reduction applies to symmetric NMF (SymNMF), which seeks an approximation $X\approx WW^\top$ with $W\geq0$.
Computing a best rank-two SymNMF is therefore strongly NP-hard, even for completely positive input matrices (Corollary~\ref{cor:rank2-symnmf} in Section~\ref{sec:symnmf}).
 
\paragraph{Concurrent AI-generated work.}
A proof of NP-hardness for rank-$2$ NMF was posted by Colbrook in the \textit{OpenProblemsInNLA} repository on 11 September 2026\footnote{\url{https://github.com/ajt60gaibb/OpenProblemsInNLA/blob/main/references/colbrook-factorization-2026-09-11/manuscripts/NM-03_rank_two_approximation_hardness.pdf}}.
However, this paper was AI-generated and the repository reports AI-agent verification rather than external human peer review.
Our work on the gadget and several components of the proof began before Colbrook's manuscript was posted online.
The present construction is a continuation of earlier results connecting rank-one matrix approximation problems with combinatorial problems on graphs~\cite{belachew2017solving,gillis2011low,gillis2014continuous,gillis2018complexity}.
 
\subsection{Organization of the paper}

Section~\ref{sec:rank-two-hardness} presents the reduction from \textsc{Clique}.
Subsections~\ref{subsec:r2nmf-construction} and~\ref{subsec:idea} present the construction and explain how the clique number is encoded in the optimal nonnegative rank-one approximation of a residual matrix. Subsection~\ref{subsec:r2nmf-general-bounds} establishes perturbation bounds that control the improvement possible when both rank-one terms are allowed to vary.
These bounds are then used in Subsection~\ref{subsec:r2nmf-correctness} to prove the correctness of the reduction.
Subsection~\ref{subsec:numerical-illustration} illustrates the reduction with a numerical example.
Finally, Section~\ref{sec:symnmf} establishes the complete positivity of the constructed matrices and extends the hardness result to rank-$2$ symmetric NMF.

\subsection{Notation}
\label{subsec:notation}
For vectors, $\|\cdot\|_2$ denotes the Euclidean norm; for matrices, $\|\cdot\|_F$ denotes the Frobenius norm, associated with the inner product $\langle U,V\rangle=\operatorname{tr}(U^\top V)$.
Inequalities between vectors or matrices are understood entrywise; accordingly, $\mathbb{R}^{m\times n}_{\geq0}$ denotes the set of $m\times n$ real matrices with nonnegative entries, and similarly for $\mathbb{Q}^{m\times n}_{\geq0}$ and $\mathbb{Z}^{m\times n}_{\geq0}$.
In contrast, $M\succeq0$ and $M\succ0$ mean that $M$ is symmetric positive semidefinite and positive definite, respectively.
The positive and negative parts of a matrix $M$ are $M^+=\max(M,0)$ and $M^-=\max(-M,0)$, with the maximum taken entrywise, so that $M=M^+-M^-$; the same notation is used for vectors.
We denote by $\mathbf{1}_k$ the all-ones vector of $\mathbb{R}^k$, by $\mathbf{1}_{k\times\ell}$ the all-ones $k\times\ell$ matrix, and by $I_k$ the identity matrix of order~$k$.
For a graph $G$, $\omega(G)$ denotes its clique number, that is, the largest number of pairwise adjacent vertices.

\section{NP-hardness of \textsc{Rank-2-NMF}}
\label{sec:rank-two-hardness}
In this section, we study the complexity of the \textsc{Rank-2-NMF} problem by providing a reduction from the \textsc{Clique} problem.
We begin by formally stating the decision versions of both problems.

\noindent
\fbox{\begin{minipage}{\dimexpr\linewidth-2\fboxsep-2\fboxrule\relax}
\textbf{\textsc{Clique}}.

\noindent
\emph{Given:} a graph $G=(V,E)$ and a positive integer $J\leq |V|$.

\noindent
\emph{Question:} does $G$ contain a subset $\mathcal{C}\subseteq V$ such that $|\mathcal{C}|\geq J$ and every two vertices in $\mathcal{C}$ are joined by an edge in $E$?
\end{minipage}}\\

\smallskip

\noindent
\fbox{\begin{minipage}{\dimexpr\linewidth-2\fboxsep-2\fboxrule\relax}
\textbf{\textsc{Rank-2-NMF}}.

\noindent
\emph{Given:} a matrix $X\in\mathbb Q_{\geq 0}^{m\times n}$ and a rational number $T>0$.

\noindent
\emph{Question:} do there exist $W\in\mathbb R_{\geq 0}^{m\times2}$ and $H\in\mathbb R_{\geq 0}^{2\times n}$ such that $\|X-WH\|_F^2\leq T$?
\end{minipage}}\\

\smallskip

\subsection{Construction of the reduction}
\label{subsec:r2nmf-construction}

\textsc{Clique} is NP-complete~\cite{karp1972reducibility}.
Let $(G,J)$ be an instance of \textsc{Clique}, where $G=(V,E)$ is a simple undirected graph with $N=|V|$ vertices and adjacency matrix $A\in\{0,1\}^{N\times N}$.
From $(G,J)$, we construct an instance $(X,T)$ of \textsc{Rank-2-NMF} in polynomial time as follows.
We set 
\begin{equation}\label{eq:reduction}
\begin{gathered}
m = n = N+1, \qquad d=20N^3, \qquad \alpha=1+\frac{1}{N}, \qquad C=dI_{N}+A-\frac{1}{\alpha}\textbf{1}_{N\times N}, \\[1ex]
X=\begin{pmatrix}C+\frac{1}{\alpha}\textbf{1}_{N\times N} & d\textbf{1}_{N}\\d\textbf{1}^T_{N} & \alpha d^2 \end{pmatrix}\in\mathbb{Z}_{\geq0}^{(N+1)\times(N+1)},
\qquad \text{and}\quad T=\|C\|_F^2-\left(d-1+\frac{J}{N+1}\right)^2.
\end{gathered}
\end{equation}
The leading block $C+\frac1\alpha\mathbf{1}_{N\times N}=dI_N+A$ of $X$ is nonnegative, and all entries of $X$ and $T$ are rational numbers of polynomial size.

\subsection{Structure of the instance}
\label{subsec:idea}
Before proving correctness, we explain how the components of the construction in~\eqref{eq:reduction} encode \textsc{Clique}.
In the leading $N\times N$ block of $X$, given by $C + \frac{1}{\alpha}\mathbf{1}_{N\times N}=dI_N+A$, the diagonal entries of the adjacency matrix are set to a large value, $d = 20N^3$, while the adjacency pattern of $A$ is preserved: off-diagonal entries are $1$ for adjacent vertices and $0$ for nonadjacent vertices.
The last row and column of $X$ are chosen so that any rank-$1$ matrix matching them exactly has every entry in its leading $N\times N$ block equal to $1/\alpha$.
Indeed, let $Q\in\mathbb{R}^{(N+1)\times(N+1)}$ be a rank-$1$ matrix such that $Q(:,N+1)=Q(N+1,:)^\top=(d\textbf{1}^\top_N~~\alpha d^2)^\top$, then
\[
Q_{ij} = \frac{Q_{i,N+1}Q_{N+1,j}}{Q_{N+1,N+1}} = \frac{d^2}{\alpha d^2} = \frac{1}{\alpha}, \qquad 1\leq i,j\leq N.
\]
This is the reason for the value $\alpha d^2$ for the lower-right entry of $X$: the values of the last row and column of this rank-one term can be large, depending on $d$, while the contribution to the leading block is independent of $d$.
Since $d$ is large, any sufficiently accurate rank-$2$ approximation of $X$ must approximate the last row and column closely.
These entries therefore naturally suggest a dominant rank-$1$ component
capturing this part of $X$.
Let us denote by $Q=vv^\top$ the rank-one matrix matching the last row and column of $X$ exactly, we have
\[
v=\begin{pmatrix}\frac{1}{\sqrt{\alpha}}\mathbf{1}_N\\\sqrt{\alpha}d\end{pmatrix}, \quad \text{ and }\quad vv^T=\begin{pmatrix}\frac{1}{\alpha}\mathbf{1}_{N\times N} & d\textbf{1}_N\\d\textbf{1}_N^T & \alpha d^2\end{pmatrix}.
\]
Subtracting this matrix $vv^T$ from $X$ gives a residual only in the upper-left leading block:
\begin{equation}\label{eq:decomposition2rank}
X=vv^\top+\begin{pmatrix}C&0\\0&0\end{pmatrix}\quad \text{ where } \quad C=dI_N+A-\frac{1}{\alpha}\mathbf{1}_{N\times N}.
\end{equation}
Since $1/\alpha=N/(N+1)$, we have
\begin{equation}\label{eq:C_details}
    C_{ij}=
    \begin{cases}
        \displaystyle d-\frac{N}{N+1}>0,
            & \text{if } i=j,\\[2mm]
        \displaystyle \frac{1}{N+1}>0,
            & \text{if } i\neq j \text{ and } (i,j)\in E,\\[2mm]
        \displaystyle -\frac{N}{N+1}<0,
            & \text{if } i\neq j \text{ and } (i,j)\notin E.
    \end{cases}
\end{equation}
From~\eqref{eq:C_details}, we see that the choices of $\alpha$ and $d$ in~\eqref{eq:reduction} give $C$ a sign pattern that directly reflects the structure of $G$.
Let us define $C^+=\max(C,0)$ and $C^-=\max(-C,0)$, with the maximum taken entrywise, so that $C=C^+-C^-$:
\begin{equation}\label{eq:C-positive-negative}
C^+ = \left(d-\frac1\alpha\right)I_N+\left(1-\frac{1}{\alpha}\right)A,\qquad C^-= \frac{1}{\alpha}\left(\mathbf{1}_{N\times N}-I_N-A\right).
\end{equation}
We can observe that $C^+$ contains large diagonal residuals and small positive residuals on edges, while $C^-$ has positive entries exactly on nonedges.
The decomposition~\eqref{eq:decomposition2rank} suggests first considering rank-$2$ approximations in which one rank-$1$ term is fixed to $vv^\top$.
The squared approximation error is 
\[
\|X-vv^\top\|_F^2=\|C\|_F^2.
\]
By using another rank-$1$ correction $Z\geq 0$ to the leading block, the objective becomes
\[
\left\|X-vv^\top-\begin{pmatrix}Z&0\\0&0\end{pmatrix}\right\|_F^2 = \|C-Z\|_F^2.
\]
For each entry $(i,j)$, the change in the objective function is 
\[
(C_{ij}-Z_{ij})^2-C_{ij}^2 = Z_{ij}^2-2C_{ij}Z_{ij}.
\]
This means the signs of $C$ indicate where a nonnegative rank-one correction can reduce the error.
Indeed, if $C_{ij}<0$, then $(i,j)$ corresponds to a nonedge, so $X_{ij}=0$ while $(vv^\top)_{ij}=1/\alpha$.
Hence, $vv^\top$ exceeds the zero entry by $1/\alpha$, and any positive contribution $Z_{ij}$ further increases the error.
If $C_{ij}>0$, then $(vv^\top)_{ij}$ is below $X_{ij}$, and adding $Z_{ij}>0$ reduces the error whenever $0<Z_{ij}<2C_{ij}$.

However, these contributions cannot be optimized independently since $Z$ must be rank-$1$.
W.l.o.g., we can write $Z=\lambda xy^\top$, where $\lambda\geq0$, $x,y\geq0$, and $\|x\|_2=\|y\|_2=1$.
The large diagonal of $X$ allows for a simplification: an optimal correction can be chosen to be symmetric.
Indeed, $C$ is symmetric and
\[
C_{ii}-\sum_{j\neq i}|C_{ij}|\geq d-\frac{N}{\alpha}>0,
\]
so $C$ is positive definite.
Thus, a best nonnegative rank-one approximation of $C$ can be chosen symmetric and written as $\lambda xx^\top$; see Lemma~\ref{lem:rank1-symmetrization} in the appendix.
The following lemma shows that identifying the best values for $\lambda$ and $x$ reduces to maximizing a quadratic function.

\begin{lemma}[Symmetric nonnegative rank-one approximation]
\label{lem:optimal-symetric-rank1approx}
Let $C\in\mathbb{R}^{N\times N}$ be a symmetric positive semidefinite matrix.
Then
\[
\min_{\substack{\lambda\geq0,\;x\geq0\\\|x\|_2=1}}\|C-\lambda xx^\top\|_F^2 = \|C\|_F^2-\left(\max_{\substack{x\geq0\\\|x\|_2=1}}x^\top Cx\right)^2.
\]
\end{lemma}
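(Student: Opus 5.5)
The plan is to expand the squared Frobenius norm and optimize first over $\lambda$ with $x$ fixed, then over $x$. For fixed unit vector $x\geq0$, using $\|xx^\top\|_F^2=\|x\|_2^4=1$ and $\langle C,xx^\top\rangle=x^\top Cx$, we get
\[
\|C-\lambda xx^\top\|_F^2=\|C\|_F^2-2\lambda\,x^\top Cx+\lambda^2 .
\]
This is a convex quadratic in $\lambda$. Its unconstrained minimizer is $\lambda^\ast=x^\top Cx$.

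The key observation is that positive semidefiniteness of $C$ gives $x^\top Cx\geq0$. Hence $\lambda^\ast$ is feasible for the constraint $\lambda\geq0$, so no case split on the sign of $x^\top Cx$ is needed. Substituting $\lambda^\ast$ shows that the inner minimum equals $\|C\|_F^2-(x^\top Cx)^2$.

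It then remains to minimize over $x\geq0$ with $\|x\|_2=1$. This amounts to maximizing $(x^\top Cx)^2$. Since $x^\top Cx\geq0$ on the feasible set and $t\mapsto t^2$ is increasing on $[0,\infty)$, this is the same as maximizing $x^\top Cx$. That yields the stated formula. Attainment of both the max and the min follows from compactness of $\{x\geq0,\ \|x\|_2=1\}$ and continuity, with $\lambda$ given explicitly by $\lambda^\ast$.

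There is no real obstacle here. The only point needing care is the role of $C\succeq0$: without it, the inner optimum for a given $x$ with $x^\top Cx<0$ would be $\lambda=0$, and the squaring step would fail.
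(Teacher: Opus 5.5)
Your proof is correct and follows essentially the same route as the paper. You expand (equivalently, complete the square) in $\lambda$, use $C\succeq0$ to see that $\lambda=x^\top Cx\geq0$ is feasible, and reduce the problem to maximizing $x^\top Cx$ over nonnegative unit vectors; your remarks on the monotonicity of $t\mapsto t^2$ on $[0,\infty)$ and on attainment by compactness make explicit two points the paper leaves implicit.
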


\begin{proof}
Since $\|xx^\top\|_F^2=\|x\|_2^4=1$, completing the square gives
\[
\|C-\lambda xx^\top\|_F^2 = \|C\|_F^2+\lambda^2-2\lambda x^\top Cx = \|C\|_F^2-(x^\top Cx)^2 +(\lambda-x^\top Cx)^2.
\]
As $C\succeq0$, the choice $\lambda=x^\top Cx\geq0$ is feasible and minimizes the last term.
Hence, finding a best symmetric nonnegative rank-one approximation of $C$ is equivalent to maximizing $x^\top Cx$ over the set of nonnegative unit vectors.
\end{proof}
Define
\begin{equation}\label{eq:mu_definition}
\mu(C):=\max_{\substack{x\geq0\\\|x\|_2=1}}x^\top Cx.
\end{equation}
By Lemmas~\ref{lem:optimal-symetric-rank1approx} and \ref{lem:rank1-symmetrization}, the optimal residual error is
\begin{equation}\label{eq:optimal-residual-error}
\min_{\substack{Z\geq0\\\operatorname{rank}(Z)\leq1}} \|C-Z\|_F^2 = \|C\|_F^2-\mu(C)^2.
\end{equation}
To identify $\mu(C)$, we use the continuous characterization of the clique number established by Belachew and Gillis~\cite{belachew2017solving}. Specifically, for every $b\geq N$, they showed that
\[
\omega(G)=\max_{z\geq0,\|z\|_2=1} z^TM_bz, \quad \text{with }\quad M_b:=(1+b)(A+I_N)-b\mathbf{1}_{N\times N}.
\]
In our case, we have
\[
C=dI_N+A-\frac{1}{\alpha}\mathbf{1}_{N\times N}=(d-1)I_N+\frac{1}{N+1}\left((1+N)(A+I_N)-N\mathbf{1}_{N\times N}\right),
\]
hence
\[
\mu(C)=\max_{x\geq0,\|x\|_2=1} x^TCx=d-1+\frac{\omega(G)}{N+1},
\]
and
\begin{equation}\label{eq:bestapproxC}
\min_{\lambda\geq0,x\geq0,\|x\|_2=1} \|C-\lambda xx^\top\|_F^2=\|C\|_F^2-\left(d-1+\frac{\omega(G)}{N+1}\right)^2.
\end{equation}
This identity motivates the threshold $T$ defined in~\eqref{eq:reduction}: the optimal residual error is at most $T$ if and only if $\omega(G)\geq J$.
We can then see the core idea of the different elements of the reduction~\eqref{eq:reduction}: the clique number of $G$ is encoded in the optimal nonnegative rank-one approximation error of $C$.

However, up to now, this correspondence has only been established for rank-$2$ approximations of $X$ that keep $vv^\top$ fixed.
An arbitrary nonnegative rank-$2$ approximation may achieve a smaller error by varying both factors.
In the next subsections, we show that this improvement cannot overcome the error gap between consecutive clique sizes, namely $J-1$ and $J$.
Section~\ref{subsec:numerical-illustration} provides a numerical illustration of this phenomenon and of the structure induced by the reduction.

\subsection{Bounds for general rank-two approximations}
\label{subsec:r2nmf-general-bounds}
In this subsection, we compare the error of a general nonnegative rank-two approximation of $X$ with the optimal error obtained when $vv^\top$ is kept fixed and the remaining rank-one term is chosen optimally, as given in~\eqref{eq:optimal-residual-error}.
The argument relies on two results.
The first result bounds the possible improvement obtained by allowing a bounded negative part in a rank-$1$ approximation.
The second controls how errors in the last row and column affect the residual in the leading block.
These two results are then combined in Lemma~\ref{lem:core-lemma}.

In Lemma~\ref{lem:optimal-symetric-rank1approx}, we considered the best nonnegative rank-$1$ approximation of a symmetric positive semidefinite matrix $C$.
In the next lemma, we now relax this nonnegativity constraint by allowing the rank-$1$ approximation $xy^\top$ to have negative entries, subject to the entrywise lower bound $xy^\top\geq-K$, where $K\in\mathbb{R}_{\geq0}^{N\times N}$ is a matrix specifying the magnitude of the negativity allowed at each entry.
We show that this relaxation can reduce the optimal rank-$1$ error in~\eqref{eq:optimal-residual-error} by at most $2\langle C^-,K\rangle$.
This last quantity depends only on the negativity allowed at the entries where $C$ is negative, as already suggested in Section~\ref{subsec:idea}.

\begin{lemma}[Robust rank-$1$ approximation bound]
\label{lem:rank1negative}
Let $C\in\mathbb{R}^{N\times N}$ be a symmetric positive semidefinite matrix.
Let $K\in\mathbb{R}_{\geq0}^{N\times N}$.
Then every pair of vectors $x,y\in\mathbb{R}^N$ satisfying $xy^\top\geq-K$ entrywise is such that
\begin{equation}\label{eq:rank1approx_with_negativ}
\|C-xy^\top\|_F^2\geq\|C\|_F^2-\mu(C)^2-2\langle C^-,K\rangle,
\end{equation}
where $C^-=\max(-C,0)$ denotes the entrywise negative part of $C$.
\end{lemma}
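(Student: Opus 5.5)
Write $Z=xy^\top$ and split it into its entrywise positive and negative parts, $Z=Z^+-Z^-$, where $0\le Z^-\le K$ by the hypothesis $xy^\top\ge -K$. Expanding the square gives
\[
\|C-Z\|_F^2=\|C\|_F^2-2\langle C,Z\rangle+\|Z\|_F^2 .
\]
Since $Z$ has rank one, $\|Z\|_F=\|x\|_2\|y\|_2$, and the whole argument reduces to bounding $\langle C,Z\rangle=x^\top Cy$ from above.

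The first step handles the negative part of $Z$. Split the inner product as
\[
\langle C,Z\rangle=\langle C,Z^+\rangle-\langle C,Z^-\rangle .
\]
Entrywise, $-C_{ij}Z^-_{ij}\le C^-_{ij}Z^-_{ij}\le C^-_{ij}K_{ij}$, so $-\langle C,Z^-\rangle\le\langle C^-,K\rangle$. It remains to bound $\langle C,Z^+\rangle$, and this is where the rank-one structure must be used.

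The second step, which I expect to be the main obstacle, does exactly that. Because $C\succeq0$, the bilinear form $(u,w)\mapsto u^\top Cw$ satisfies Cauchy--Schwarz. For a rank-one $Z$ with $Z^+\neq 0$, the positive entries form a "block" $\{i:x_i>0\}\times\{j:y_j>0\}\cup\{x_i<0\}\times\{y_j<0\}$. Let $x^\pm=\max(\pm x,0)$ and define $y^\pm$ similarly. Then $Z^+=x^+y^{+\top}+x^-y^{-\top}$ and $Z^-=x^+y^{-\top}+x^-y^{+\top}$.

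Rather than treating $Z^+$ alone, I will bound the full quantity directly:
\[
x^\top Cy=x^{+\top}Cy^+ + x^{-\top}Cy^- - x^{+\top}Cy^- - x^{-\top}Cy^+ .
\]
The last two terms together equal $-\langle C,Z^-\rangle$ and were bounded in the first step. For the first two terms, apply Cauchy--Schwarz in the $C$-inner product together with the definition of $\mu(C)$ on nonnegative vectors, which gives $u^\top Cu\le\mu(C)\|u\|_2^2$ for $u\ge0$. Then
\[
x^{+\top}Cy^++x^{-\top}Cy^-\le\mu(C)\bigl(\|x^+\|\|y^+\|+\|x^-\|\|y^-\|\bigr)\le\mu(C)\|x\|_2\|y\|_2,
\]
where the last inequality is the Cauchy--Schwarz inequality in $\mathbb{R}^2$, using $\|x^+\|^2+\|x^-\|^2=\|x\|^2$ and likewise for $y$.

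Finally, set $t=\|x\|_2\|y\|_2$ and combine the two steps:
\[
\|C-Z\|_F^2\ge\|C\|_F^2-2\mu(C)t+t^2-2\langle C^-,K\rangle .
\]
Minimizing $t^2-2\mu(C)t$ over $t\ge0$ gives $-\mu(C)^2$, which yields \eqref{eq:rank1approx_with_negativ}. Notably, the second step needs neither Lemma~\ref{lem:rank1-symmetrization} nor any symmetrization, only $C\succeq0$ and nonnegativity of $x^\pm,y^\pm$. The point I would double-check is the sign bookkeeping in the first step, namely that the bound uses only $C^-$ and the entries of $Z^-$ bounded by $K$, which holds because $C^+\ge0$ contributes nonpositively there.
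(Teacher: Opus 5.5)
Your proposal is correct and follows the paper's proof essentially step for step. It uses the same ingredients:
\begin{itemize}
\item the split $(xy^\top)^+=x^+(y^+)^\top+x^-(y^-)^\top$, bounded via Cauchy--Schwarz for the $C$-semi-inner product with $\mu(C)$ and then in $\mathbb{R}^2$;
\item the entrywise bound $-\langle C,(xy^\top)^-\rangle\le\langle C^-,(xy^\top)^-\rangle\le\langle C^-,K\rangle$, whose sign bookkeeping you handle correctly;
\item completion of the square in $t=\|x\|_2\|y\|_2$.
\end{itemize}
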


\begin{proof}
Since $C\succeq0$, using~\eqref{eq:mu_definition} and the Cauchy-Schwarz inequality gives, for any nonnegative vectors $u,v$,
\[
u^\top Cv \leq \sqrt{u^\top Cu}\sqrt{v^\top Cv}\leq \mu(C)\|u\|_2\|v\|_2.
\]
The positive part of $xy^\top$ satisfies $(xy^\top)^+=x^+(y^+)^\top+x^-(y^-)^\top$.
Consequently,
\[
\langle C,(xy^\top)^+\rangle\leq\mu(C)\left(\|x^+\|_2\|y^+\|_2+\|x^-\|_2\|y^-\|_2\right)\leq\mu(C)\|x\|_2\|y\|_2,
\]
where the last inequality follows again from Cauchy--Schwarz and the identities $\|x^+\|_2^2+\|x^-\|_2^2=\|x\|_2^2$ and $\|y^+\|_2^2+\|y^-\|_2^2=\|y\|_2^2$.

For the negative part, the assumption $xy^\top\geq-K$ gives $0\leq(xy^\top)^-\leq K$.
Hence,
\[
-\langle C,(xy^\top)^-\rangle \leq \langle C^-,(xy^\top)^-\rangle \leq \langle C^-,K\rangle.
\]
Combining these bounds gives
\[
\langle C,xy^\top\rangle \leq \mu(C)\|x\|_2\|y\|_2+\langle C^-,K\rangle.
\]
Using $\|xy^\top\|_F^2=\|x\|_2^2\|y\|_2^2$ and completing the square, we obtain
\[
\begin{aligned}
\|C-xy^\top\|_F^2 & = \|C\|_F^2-2\langle C,xy^\top\rangle+\|x\|_2^2\|y\|_2^2 \\
& \geq \|C\|_F^2 -2\mu(C)\|x\|_2\|y\|_2 +\|x\|_2^2\|y\|_2^2 -2\langle C^-,K\rangle \\
& = \|C\|_F^2-\mu(C)^2 -2\langle C^-,K\rangle + \bigl(\|x\|_2\|y\|_2-\mu(C)\bigr)^2.
\end{aligned}
\]
Dropping the final nonnegative term gives \eqref{eq:rank1approx_with_negativ}.
\end{proof}

The previous lemma bounds the possible improvement obtained by allowing a bounded negative part in a rank-$1$ correction of $C$.
This bound will be used to control the error of a general nonnegative rank-$2$ approximation of $X$.
We write such a rank-$2$ NMF in block form as
\begin{equation}\label{eq:generalrank2approx}
Y = WH = \begin{pmatrix} D & p \\ q^\top & \delta \end{pmatrix}, \quad W\geq0, H\geq 0, 
\end{equation}
and denote the part of the error on the last row and column by
\begin{equation}\label{eq:def-B}
\beta=\|d\mathbf{1}-p\|_2^2+\|d\mathbf{1}-q\|_2^2+(\alpha d^2-\delta)^2.
\end{equation}
The total approximation error is then
\[
\|X-WH\|_F^2 = \left\|C+\frac1\alpha\mathbf{1}_{N\times N}-D\right\|_F^2 + \beta.
\]
For $Y$ as in~\eqref{eq:generalrank2approx}, if $\delta\neq0$, the unique rank-one matrix reproducing the last row and column of $Y$ exactly is
\[
\begin{pmatrix} pq^\top/\delta & p \\ q^\top &  \delta\end{pmatrix}.
\]
Thus, $pq^\top/\delta$ is the leading block induced by the last row and column of $Y$.
Since the corresponding block induced by the last row and column of $X$ is $\frac{1}{\alpha}\mathbf{1}_{N\times N}$, when $\delta\neq0$ we can define their difference as
\begin{equation}\label{eq:R_definition}
R:=\frac1\alpha\mathbf{1}_{N\times N}-\frac{pq^\top}{\delta}.
\end{equation}
The next lemma shows that a small value of $\beta$ guarantees $\delta>0$ and provides a bound on $\|R\|_F$.
\begin{lemma}[Perturbation of the last row and column]\label{lem:residual-lemma}
Let $p,q\in\mathbb{R}^N$ and $\delta\in\mathbb{R}$, and let $\beta=\|d\mathbf{1}-p\|_2^2+\|d\mathbf{1}-q\|_2^2+(\alpha d^2-\delta)^2$.
With $d$ and $\alpha$ as in~\eqref{eq:reduction}, if $\beta\leq2Nd^2$, then 
\[
\delta\geq d^2 >0 \quad\text{ and }\quad \left\|R\right\|_F\leq\frac{4\sqrt{N\beta}}{d}.
\]

\end{lemma}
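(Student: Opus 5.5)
The plan is to treat the three pieces of $\beta$ separately and then bound the deviation of the rank-one block $pq^\top/\delta$ from $\frac{1}{\alpha}\mathbf{1}_{N\times N}$ by a telescoping decomposition. Since each summand of $\beta$ is at most $\beta$, we get
\[
|\alpha d^2-\delta|\leq\sqrt{\beta}, \qquad \|d\mathbf{1}-p\|_2\leq\sqrt{\beta}, \qquad \|d\mathbf{1}-q\|_2\leq\sqrt{\beta}.
\]

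\emph{First claim.} Under $\beta\leq 2Nd^2$ we have $\sqrt{\beta}\leq\sqrt{2N}\,d$. Since $d=20N^3$, this satisfies $\sqrt{2N}\,d\leq d^2/N=(\alpha-1)d^2$. Hence
\[
\delta\geq\alpha d^2-\sqrt{\beta}\geq d^2>0.
\]
This is the only place where the specific size of $d$ enters the first claim.

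\emph{Second claim.} Write $p=d\mathbf{1}+e$, $q=d\mathbf{1}+f$ and $\delta=\alpha d^2-\eta$, with $\|e\|_2,\|f\|_2,|\eta|\leq\sqrt\beta$. Then
\[
R=\frac{1}{\alpha}\mathbf{1}\mathbf{1}^\top-\frac{d^2\mathbf{1}\mathbf{1}^\top}{\delta}-\frac{d(\mathbf{1}f^\top+e\mathbf{1}^\top)+ef^\top}{\delta}.
\]
I would bound the three parts as follows.
\begin{itemize}
\item The first difference equals $\mathbf{1}\mathbf{1}^\top\,(\delta-\alpha d^2)/(\alpha\delta)$. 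Its Frobenius norm is at most $N\sqrt\beta/(\alpha d^2)\leq N\sqrt\beta/d^2$, using $\delta\geq d^2$.
\item The cross terms have norm at most $2d\sqrt N\sqrt\beta/\delta\leq 2\sqrt{N\beta}/d$.
\item The last term has norm at most $\beta/d^2$.
\end{itemize}

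To reach the target $4\sqrt{N\beta}/d$, each piece must be controlled by a multiple of $\sqrt{N\beta}/d$:
\begin{itemize}
\item For the first piece, $N\sqrt\beta/d^2=\sqrt{N\beta}/d\cdot\sqrt N/d\leq\sqrt{N\beta}/d$, since $d\geq\sqrt N$.
\item For the last piece, $\beta/d^2=\sqrt\beta\,\sqrt\beta/d^2\leq\sqrt{\beta}\,\sqrt{2N}\,d/d^2=\sqrt2\,\sqrt{N\beta}/d$, using the hypothesis again.
\end{itemize}
Summing gives at most $(1+2+\sqrt2)\sqrt{N\beta}/d$. This exceeds $4$, so one piece needs a slightly sharper bound. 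I would use $|\eta|\leq\sqrt\beta$ together with the fact that $\sqrt{N}/d$ is tiny, which makes the first piece negligible, e.g.\ at most $0.1\,\sqrt{N\beta}/d$. Then $0.1+2+\sqrt2<4$.

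The main obstacle is this constant bookkeeping. The cruder estimates must be balanced so that the total constant stays at most $4$, and I expect the generous choice $d=20N^3$ to make the first piece small enough. No earlier lemma is needed; the argument is pure triangle inequality plus $\|ab^\top\|_F=\|a\|_2\|b\|_2$.
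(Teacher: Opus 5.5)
Your proposal is correct and follows essentially the same route as the paper: the same bound $\sqrt{2N}\,d\le d^2/N$ gives $\delta\ge d^2$, and the same three-term expansion of $R$ is bounded by the triangle inequality, with $\beta\le\sqrt{2N}\,d\sqrt\beta$ controlling the quadratic term and the smallness of $\sqrt N/d$ (indeed $\sqrt N/d\le 1/20$) absorbing the $\mathbf{1}_{N\times N}$ term so that the constant stays below $4$. The only difference is cosmetic: the paper bounds $\|\delta R\|_F\le\bigl(N+(2+\sqrt2)\,d\sqrt N\bigr)\sqrt\beta$ before dividing by $\delta\ge d^2$, whereas you divide term by term.
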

\begin{proof}
First, since $(\alpha d^2-\delta)^2\leq \beta\leq2Nd^2$, we have $\alpha d^2-\delta\leq\sqrt{2N}d\leq \frac{d^2}{N}$, hence $\delta\geq\alpha d^2-\frac{d^2}{N}=d^2$.
Second, expanding $pq^\top=\bigl(d\mathbf{1}-(d\mathbf{1}-p)\bigr)\bigl(d\mathbf{1}-(d\mathbf{1}-q)\bigr)^\top$ in~\eqref{eq:R_definition} gives
\[
\delta R=-\frac{\alpha d^2-\delta}{\alpha}\mathbf{1}_{N\times N}+(d\mathbf{1}-p)(d\mathbf{1})^\top+(d\mathbf{1})(d\mathbf{1}-q)^\top-(d\mathbf{1}-p)(d\mathbf{1}-q)^\top .
\]
Each of the quantities $|\alpha d^2-\delta|$, $\|d\mathbf{1}-p\|_2$ and $\|d\mathbf{1}-q\|_2$ is at most $\sqrt{\beta}$, and $\|d\mathbf{1}\|=d\sqrt N$. Hence, using $1/\alpha <1$ and $\beta\leq\sqrt{2N}d\sqrt{\beta}$, we obtain
\begin{equation}\label{eq:deltaR}
\|\delta R\|_F\leq N\sqrt{\beta}+2d\sqrt N\sqrt{\beta}+\beta\leq\bigl(N+(2+\sqrt2)\,d\sqrt N\bigr)\sqrt{\beta}\leq4\sqrt N\,d\sqrt{\beta} .
\end{equation}
Dividing by $\delta\geq d^2$ gives the claim.
\end{proof}

We can now combine the two preceding results.
The idea is to decompose the leading block $D$ of~\eqref{eq:generalrank2approx} into the rank-$1$ block determined by the last row and column of $Y$ and a remaining correction, and then verify that this correction satisfies the assumptions of Lemma~\ref{lem:rank1negative}.
More precisely, when $\beta\leq2Nd^2$, Lemma~\ref{lem:residual-lemma} gives $\delta>0$, and we can therefore define
\begin{equation}\label{eq:def-KSR}
K=\frac{pq^\top}{\delta}, \qquad S=D-K.
\end{equation}
With these definitions,
\[
C+\frac1\alpha\mathbf{1}_{N\times N}-D=C-S+R.
\]
This last relation means that the error in the leading block can be expressed in terms of the rank-$1$ correction $S$ and the perturbation $R$.
Since $Y\geq0$, we have $K\geq0$ and $S\geq-K$.
Moreover, $S$ has rank at most one since we know that
\[
\operatorname{rank}\begin{pmatrix} D & p \\ q^T & \delta \end{pmatrix} = 1+\operatorname{rank}\left(D-\frac{pq^\top}{\delta}\right).
\]
Therefore, $S=xy^\top$ for some $x,y\in\mathbb{R}^N$, with $xy^\top\geq-K$, so Lemma~\ref{lem:rank1negative} can provide a lower bound on $\|C-S\|_F^2$.
Lemma~\ref{lem:residual-lemma} controls $\|R\|_F$ in terms of $\beta$.

The following lemma combines these estimates to show that a general rank-$2$ NMF of $X$ can improve the optimal nonnegative rank-$1$ error for $C$ (see~\eqref{eq:bestapproxC}) by at most $20N^2$, independently of $d$.

\begin{lemma}\label{lem:core-lemma}
Let $X$ and $C$ be the matrices constructed above.
For any nonnegative factors $W\in\mathbb{R}_{\geq0}^{(N+1)\times2}$ and $H\in\mathbb{R}_{\geq0}^{2\times(N+1)}$, we have
\[
\|X-WH\|_F^2 \geq \|C\|_F^2-\mu(C)^2-20N^2.
\]
\end{lemma}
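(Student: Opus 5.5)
We split into two cases according to the size of $\beta$ defined in~\eqref{eq:def-B}. Write $E:=\|C\|_F^2-\mu(C)^2$ for the optimal fixed-$vv^\top$ error from~\eqref{eq:optimal-residual-error}. Since $\mu(C)\geq C_{11}>0$, we have $E\leq\|C\|_F^2$. Moreover $\|C\|_F^2\leq N d^2+N^2$, because the diagonal entries are below $d$ and the off-diagonal entries are at most $1$ in absolute value.

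\emph{Case $\beta>2Nd^2$.} Here $\|X-WH\|_F^2\geq\beta>2Nd^2$. This exceeds $Nd^2+N^2\geq E$ because $d^2\geq N^2$. The bound therefore holds trivially, even without the $-20N^2$ term.

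\emph{Case $\beta\leq2Nd^2$.} Lemma~\ref{lem:residual-lemma} gives $\delta\geq d^2>0$ and $\|R\|_F\leq 4\sqrt{N\beta}/d$. We define $K=pq^\top/\delta\geq0$ and $S=D-K=xy^\top$ as in~\eqref{eq:def-KSR}, so that $S\geq -K$. The leading-block error is then $\|C-S+R\|_F^2$. Setting $a:=\|C-S\|_F$, we get
\[
\|X-WH\|_F^2\geq (a-\|R\|_F)^2+\beta \geq a^2-2a\|R\|_F+\beta .
\]
Lemma~\ref{lem:rank1negative} gives $a^2\geq E-2\langle C^-,K\rangle$, and it remains to control the two negative terms.

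\emph{Bounding $\langle C^-,K\rangle$.} We write $K=\frac1\alpha\mathbf{1}_{N\times N}-R$. Since $C^-$ has at most $N^2$ entries, each at most $1/\alpha<1$,
\[
\langle C^-,K\rangle\leq \|C^-\|_F\,\|K\|_F\leq N\Bigl(N+\tfrac{4\sqrt{N\beta}}{d}\Bigr).
\]
This gives a main term $2N^2$ plus a term linear in $\sqrt\beta$. A sharper estimate uses $\langle C^-,\mathbf 1/\alpha\rangle\leq N^2/\alpha^2$, but the crude bound should suffice.

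\emph{Bounding $2a\|R\|_F$.} This is the main obstacle, because $a$ can be of order $d\sqrt N$. The plan is to absorb this term with $\beta$. We may assume $a^2\leq \|X-WH\|_F^2\leq E$ (otherwise we are done), which gives $a\leq \sqrt{N}d+N$. Then
\[
2a\|R\|_F\leq 2(\sqrt N d+N)\cdot\frac{4\sqrt{N\beta}}{d}\leq 9N\sqrt\beta .
\]
Collecting everything, the total is at least
\[
E-2N^2-c\,N\sqrt\beta+\beta
\]
for a small constant $c$ (roughly $9$ plus $8N^{3/2}/d$, which is negligible). We then minimize $\beta-cN\sqrt\beta$ over $\beta\geq0$. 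The minimum is $-c^2N^2/4$, giving a total loss of $(2+c^2/4)N^2$.

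With $c\approx 9$ this loss is about $22N^2$, slightly above the target $20N^2$. The constants must therefore be tightened to reach exactly $20N^2$. The available sources of slack are the following:
\begin{itemize}
\item use $\|R\|_F$ directly in the bound on $\langle C^-,K\rangle$ via $\langle C^-,R\rangle$;
\item use $1/\alpha<1$ and the sharper constant in~\eqref{eq:deltaR}, namely $(2+\sqrt2)$ together with $N\ll d$, in place of $4$;
\item use $\|C^-\|_F\leq N$;
\item assume $\|X-WH\|_F^2\leq E$ to bound $a\leq\sqrt{E}\leq \sqrt{N}d+N/\sqrt{...}$.
\end{itemize}
Together these give $c\leq 2\cdot(2+\sqrt2+o(1))+o(1)\approx 7$, and hence a loss of $2N^2+49N^2/4\leq 15N^2<20N^2$. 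This careful constant bookkeeping is the step I expect to require the most care.
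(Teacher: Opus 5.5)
Your route is the paper's: the same splitting $D=K+S$ with $K=pq^\top/\delta$, Lemma~\ref{lem:rank1negative} applied to $S\geq-K$, the substitution $K=\frac1\alpha\mathbf{1}_{N\times N}-R$ to bound $\langle C^-,K\rangle$, and completing the square in $\sqrt\beta$. One step, however, is not valid as written: ``we may assume $a^2\leq\|X-WH\|_F^2$''.

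The error identity gives $\|X-WH\|_F^2=\|C-S+R\|_F^2+\beta$. It therefore controls $\|C-S+R\|_F$, not $a=\|C-S\|_F$. The difference $a^2-\|C-S+R\|_F^2=-2\langle C-S,R\rangle-\|R\|_F^2$ can a priori be as large as about $8N\sqrt\beta$. That exceeds $\beta$ exactly in the small-$\beta$ regime that matters, so the comparison is not justified. Your fourth ``slack'' item reuses the same step.

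The repair is the triangle inequality the paper uses for the cross term. When $\|X-WH\|_F^2\leq E$, one has $a\leq\|C-S+R\|_F+\|R\|_F\leq\sqrt E+\|R\|_F$. Then $-2a\|R\|_F\geq-2\sqrt E\,\|R\|_F-2\|R\|_F^2$. If you also keep the $+\|R\|_F^2$ that you dropped from $(a-\|R\|_F)^2$, you obtain
\[
\|X-WH\|_F^2\geq E-2N^2-2\|R\|_F\bigl(\sqrt E+N\bigr)-\|R\|_F^2+\beta .
\]
This is the paper's intermediate bound, except that the paper has $\|C\|_F\geq\sqrt E$ in place of $\sqrt E$.

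After this repair, no tightening of constants is needed. Your overshoot to about $22N^2$ comes only from rounding $8(1+\sqrt N/d)$ up to $9$. For $N\geq2$ and $d=20N^3$ one has $\sqrt N/d<0.01$ and $16N/d^2<0.01$. The crude estimates $\|R\|_F\leq4\sqrt{N\beta}/d$ and $\sqrt E+N\leq\sqrt N d+2N$ then already give
\[
\|X-WH\|_F^2\geq E-2N^2-8.2N\sqrt\beta+0.99\beta\geq E-19N^2 .
\]
The paper does precisely this, with coefficient $42/5$ on $N\sqrt\beta$ and $49/50$ on $\beta$, which yields $2N^2+18N^2$. Your $(2+\sqrt2)$ refinement would also go through once the bound on $a$ is fixed. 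It is unnecessary, though, and as written it is only a sketch: the bound on $a$ in your last item is left unfinished.
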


\begin{proof}
The claim is immediate if $\|X-WH\|_F^2\geq\|C\|_F^2-\mu(C)^2$.
We therefore assume that $\|X-WH\|_F^2<\|C\|_F^2-\mu(C)^2\leq\|C\|_F^2$.

Let $Y$ and $\beta$ be defined as in~\eqref{eq:generalrank2approx} and~\eqref{eq:def-B}.
The entries of $C$ give
\[
\|C\|_F^2 \leq N\left(d-\frac1\alpha\right)^2 + \frac{N(N-1)}{\alpha^2} \leq Nd^2.
\]
Hence $\beta\leq\|X-WH\|_F^2\leq\|C\|_F^2\leq Nd^2<2Nd^2$, so Lemma~\ref{lem:residual-lemma} gives $\delta\geq d^2>0$.
The matrices $K$ and $S$ in~\eqref{eq:def-KSR} and the
perturbation $R$ in~\eqref{eq:R_definition} are therefore well defined.
The approximation error can be written as
\begin{equation}\label{eq:termstobound}
\|X-WH\|_F^2 =\|C-S+R\|_F^2+\beta =\|C-S\|_F^2+2\langle C-S,R\rangle+\|R\|_F^2+\beta.
\end{equation}
We bound the first two terms separately.
\begin{itemize}
    \item By the triangle inequality,
    \[
    \|C-S\|_F\leq\|C-S+R\|_F+\|R\|_F\leq\|C\|_F+\|R\|_F,
    \]
    since $\|C-S+R\|_F^2\leq\|X-WH\|_F^2\leq\|C\|_F^2$.
    Hence, the cross term in \eqref{eq:termstobound} is bounded as follows:
    \begin{equation}\label{eq:term2}
        \langle C-S,R\rangle \geq - \|C-S\|_F\|R\|_F\geq-\|R\|_F\bigl(\|C\|_F+\|R\|_F\bigr).
    \end{equation}

    \item As shown above, $K\geq0$, $\operatorname{rank}(S)\leq1$, and $S\geq-K$, so Lemma~\ref{lem:rank1negative} applies to $S$:
    \[
    \|C-S\|_F^2 \geq\|C\|_F^2-\mu(C)^2-2\langle C^-,K\rangle.
    \]
    As defined in~\eqref{eq:C-positive-negative}, $C^-=\frac{1}{\alpha}\bigl(\mathbf{1}_{N\times N}-I_N-A\bigr)$, so $\|C^-\|_F^2 = \frac{1}{\alpha^2}(N^2-N-2|E|)<N^2$.
    Using $K=\frac{1}{\alpha}\mathbf{1}_{N\times N}-R$, we obtain
    \begin{equation}\label{eq:term1}
        2\langle C^-,K\rangle = \frac2\alpha\langle C^-,\mathbf{1}_{N\times N}\rangle-2\langle C^-,R\rangle\leq 2N^2+2N\|R\|_F.
    \end{equation}
\end{itemize}
Plugging~\eqref{eq:term2} and \eqref{eq:term1} into \eqref{eq:termstobound} gives
\[
\|X-WH\|_F^2 \geq\|C\|_F^2-\mu(C)^2-2N^2-2\|R\|_F(\|C\|_F+N)-\|R\|_F^2+\beta.
\]
Lemma~\ref{lem:residual-lemma} gives $\|R\|_F\leq4\sqrt{N\beta}/d$, hence
\[
\|X-WH\|_F^2 \geq\|C\|_F^2-\mu(C)^2-2N^2-\frac{8\sqrt{N\beta}}{d}(\|C\|_F+N)+\left(1-\frac{16N}{d^2}\right)\beta.
\]
Since $d=20N^3$ and $N\geq2$, we have
\[
\|C\|_F+N
\leq\left(1+\frac{\sqrt N}{d}\right)\sqrt{N}d\leq\frac{21}{20}\sqrt{N}d,\qquad1-\frac{16N}{d^2}\geq\frac{49}{50}.
\]
Consequently,
\[
\|X-WH\|_F^2 \geq\|C\|_F^2-\mu(C)^2-2N^2 -\frac{42}{5}N\sqrt{\beta}+\frac{49}{50}\beta.
\]
Completing the square gives
\[
\frac{49}{50}\beta-\frac{42}{5}N\sqrt{\beta} = \frac{49}{50} \left(\sqrt{\beta}-\frac{30}{7}N\right)^2 -18N^2 \geq-18N^2.
\]
Therefore,
\[
\|X-WH\|_F^2\geq\|C\|_F^2-\mu(C)^2-20N^2,
\]
as claimed.
\end{proof}

\subsection{Correctness of the reduction}
\label{subsec:r2nmf-correctness}
Lemma~\ref{lem:core-lemma} shows that no nonnegative rank-$2$ approximation can improve the optimal nonnegative rank-$1$ error for $C$ by more than $20N^2$.
Crucially, this bound is independent of $d$.
In order to prove the correctness of the reduction, we show that the gap between the residual errors associated with consecutive clique sizes increases with $d$.
Choosing $d$ so that this gap exceeds $20N^2$ therefore ensures that an approximation can reach the threshold $T$ only if $G$ contains a clique of size at least $J$.

\begin{proposition}[Correctness of the reduction]\label{prop:reduction}
The instance $(X,T)$ is a yes-instance of \textsc{Rank-2-NMF} if and only if $(G,J)$ is a yes-instance of \textsc{Clique}.
\end{proposition}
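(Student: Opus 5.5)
The proof has two directions, both resting on the identity $\mu(C)=d-1+\omega(G)/(N+1)$ derived in Subsection~\ref{subsec:idea} from the Belachew--Gillis characterization.

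\emph{Clique $\Rightarrow$ small error.} Suppose $\omega(G)\geq J$. By~\eqref{eq:bestapproxC} and Lemma~\ref{lem:optimal-symetric-rank1approx}, take a nonnegative unit vector $x$ attaining $\mu(C)$ and set $\lambda=\mu(C)\geq0$. Define
\[
W=\begin{pmatrix} v & \begin{pmatrix}\sqrt{\lambda}\,x\\ 0\end{pmatrix}\end{pmatrix}, \qquad H=W^\top .
\]
Both factors are nonnegative, since $v\geq0$. Using the decomposition~\eqref{eq:decomposition2rank}, we get
\[
\|X-WH\|_F^2=\|C-\lambda xx^\top\|_F^2=\|C\|_F^2-\mu(C)^2 .
\]
Since $\mu(C)\geq d-1+J/(N+1)>0$, this is at most $\|C\|_F^2-\bigl(d-1+J/(N+1)\bigr)^2=T$. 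The factors also satisfy $H=W^\top$, which will be reused for SymNMF.

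\emph{Small error $\Rightarrow$ clique.} Suppose $\omega(G)\leq J-1$, and let $W,H\geq0$ be arbitrary. Lemma~\ref{lem:core-lemma} gives
\[
\|X-WH\|_F^2\geq\|C\|_F^2-\mu(C)^2-20N^2\geq\|C\|_F^2-\Bigl(d-1+\tfrac{J-1}{N+1}\Bigr)^2-20N^2 .
\]
It therefore suffices to show that
\[
\Bigl(d-1+\tfrac{J}{N+1}\Bigr)^2-\Bigl(d-1+\tfrac{J-1}{N+1}\Bigr)^2>20N^2 .
\]
The left side is a difference of squares, equal to
\[
\frac{1}{N+1}\Bigl(2d-2+\frac{2J-1}{N+1}\Bigr)\geq\frac{2(d-1)}{N+1}.
\]
With $d=20N^3$ this is at least $(40N^3-2)/(N+1)$. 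This exceeds $20N^2$ exactly when $40N^3-2>20N^3+20N^2$, that is, when $20N^3>20N^2+2$, which holds for $N\geq2$. Hence $\|X-WH\|_F^2>T$ for every nonnegative pair, so $(X,T)$ is a no-instance.

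\emph{Main obstacle.} All the difficulty sits in Lemma~\ref{lem:core-lemma}, which we take as given, so the only remaining care is this gap computation. Note also that $N\geq2$ is required, and the case $N=1$ is trivial for \textsc{Clique}. Together with the polynomial, integer-valued size of $(X,T)$ noted after~\eqref{eq:reduction}, this establishes the equivalence and hence strong NP-hardness.
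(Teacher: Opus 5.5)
Your proof is correct and follows the paper's argument. The ``only if'' direction uses Lemma~\ref{lem:core-lemma} and the same difference-of-squares gap, which exceeds $20N^2$ for $N\geq2$. The ``if'' direction uses the same symmetric factorization, with first column $v$ and $H=W^\top$. The one minor difference is in that second factor: the paper builds it explicitly from the normalized clique indicator $z$, computing $z^\top Cz=d-1+J/(N+1)$ directly and attaining exactly $T$. You instead take a maximizer of $\mu(C)$ and invoke the Belachew--Gillis identity, which works equally well but is slightly less self-contained.
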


\begin{proof}
\textbf{If part.} Suppose first that $(G,J)$ is a yes-instance of \textsc{Clique}.
This means that $G$ contains a clique with $J$ vertices.
From such a clique, we define $z\in\mathbb{R}^{N}$ by
\[
z_i =
\begin{cases}
1/\sqrt{J}, & \text{if vertex } i \text{ belongs to the clique},\\
0, & \text{otherwise}.
\end{cases}
\]
Since $z$ has exactly $J$ nonzero entries, each equal to $1/\sqrt{J}$, we have $\|z\|_2=1$.
Moreover, its support is a clique; then,
\[
z^{\top}Cz= \frac{1}{J}\sum_{\substack{i,j\\(i,j)\in\text{clique}}} C_{ij}=\frac{1}{J}\left(J\left(d-\frac{1}{\alpha}\right) + J(J-1)\left(1-\frac{1}{\alpha}\right)\right)= d-1+\frac{J}{N+1}.
\]
We also define
\[
v=\begin{pmatrix} \frac{1}{\sqrt{\alpha}}\mathbf1_{N}\\
\sqrt{\alpha}d \end{pmatrix},
\quad \widetilde{z}= \begin{pmatrix}z\\0\end{pmatrix},
\quad W=\begin{pmatrix}v & \sqrt{d-1+\frac{J}{N+1}}\widetilde z\end{pmatrix}, \quad H=W^\top.
\]
These factors are nonnegative and verify
\[
\|X-WH\|_F^2 =  \left\|C-\left(d-1+\frac{J}{N+1}\right)zz^\top \right\|_F^2 = \|C\|_F^2-\left(d-1+\frac{J}{N+1}\right)^2=T.
\]
Thus, $(X,T)$ is a yes-instance of \textsc{Rank-2-NMF}.

\paragraph{Only if part.} Suppose now that $(X,T)$ is a yes-instance of \textsc{Rank-2-NMF}.
This means that there exist $W\in\mathbb{R}_{\geq 0}^{n\times 2}$, $H\in\mathbb{R}_{\geq 0}^{2\times n}$ such that $\|X-WH\|_F^2\leq T$.
We must show the existence of a clique of size at least $J$.
Assume, to the contrary, that $\omega(G)\leq J-1$.
By Lemma~\ref{lem:core-lemma},
\[
\begin{aligned}
\|X-WH\|_F^2 & \geq \|C\|_F^2- \left(d-1+\frac{\omega(G)}{N+1}\right)^2 -20N^2 \\
& \geq \|C\|_F^2-\left(d-1+\frac{J-1}{N+1}\right)^2-20N^2\\
& = \|C\|_F^2-\left(d-1+\frac{J}{N+1}\right)^2+\left(d-1+\frac{J}{N+1}\right)^2-\left(d-1+\frac{J-1}{N+1}\right)^2-20N^2\\
& = T+\left(d-1+\frac{J}{N+1}\right)^2-\left(d-1+\frac{J-1}{N+1}\right)^2-20N^2.
\end{aligned}
\]
Since
\[
\left(d-1+\frac{J}{N+1}\right)^2 - \left(d-1+\frac{J-1}{N+1}\right)^2= \frac{2(d-1)}{N+1} + \frac{2J-1}{(N+1)^2} > \frac{2(d-1)}{N+1}\geq \frac{d}{N}=20N^2,
\]
we obtain
\[
\|X-WH\|_F^2>T,
\]
a contradiction.
Hence $\omega(G)\geq J$, and $(G,J)$ is a yes-instance of \textsc{Clique}.
\end{proof}

Our main result, Theorem~\ref{thm:r2hardness}, follows directly from the reduction
established above.

\begin{proof}[Proof of Theorem~\ref{thm:r2hardness}]
The instance $(X,T)$ is constructed in polynomial time and \textsc{Clique} is NP-complete~\cite{karp1972reducibility}, so Proposition~\ref{prop:reduction} implies the claim.

The reduction also establishes strong NP-hardness.
Indeed, all numerical quantities in the constructed instance $(X,T)$ have magnitudes, numerators, and denominators bounded polynomially in $N$.
Hence, \textsc{Rank-2-NMF} is strongly NP-hard.
\end{proof}

\subsection{A numerical illustration}
\label{subsec:numerical-illustration}

We illustrate the reduction on a graph $G$ with $N=6$ vertices whose adjacency matrix is
\[
A=
\begin{pmatrix}
0&1&1&1&0&1\\
1&0&1&0&1&0\\
1&1&0&1&0&1\\
1&0&1&0&0&1\\
0&1&0&0&0&1\\
1&0&1&1&1&0
\end{pmatrix}.
\]
For this graph, we have $\omega(G)=4$ and a unique maximum clique,
\[
\mathcal{C}=\{1,3,4,6\}.
\]
For $N=6$, our construction~\eqref{eq:reduction} gives
\[
d=20N^3=4320, \qquad \alpha=\frac{7}{6},
\]
and hence
\[
X=
\begin{pmatrix}
4320&1&1&1&0&1&4320\\
1&4320&1&0&1&0&4320\\
1&1&4320&1&0&1&4320\\
1&0&1&4320&0&1&4320\\
0&1&0&0&4320&1&4320\\
1&0&1&1&1&4320&4320\\
4320&4320&4320&4320&4320&4320&21772800
\end{pmatrix}.
\]
In order to illustrate the gap trick used in the proof of Proposition~\ref{prop:reduction}, we define
\[
T_j:=\|C\|_F^2-\left(d-1+\frac{j}{N+1}\right)^2.
\]
For our example, taking $J=4$, we have $T=T_4 = \|C\|_F^2-\mu(C)^2$ and the different values are given in Table~\ref{tab:numerical-error-levels}.
\begin{table}[ht]
\centering
\begin{tabular}{c|c|c}
\hline
$j$ & $T_j$ & $T_{j-1}-T_j$\\
\hline
1 & $93\,274\,982.857$ & --\\
2 & $93\,273\,748.796$ & $1\,234.061$\\
3 & $93\,272\,514.694$ & $1\,234.102$\\
4 & $93\,271\,280.551=T$ & $1\,234.143$\\
\hline
\end{tabular}
\caption{Residual error levels associated with clique sizes $j=1,\ldots,4$.}
\label{tab:numerical-error-levels}
\end{table}

In Table~\ref{tab:numerical-error-levels}, we see that the different levels are separated by approximately $1234$ while the maximum improvement allowed by Lemma~\ref{lem:core-lemma} is $20N^2=720$.
For an optimal approximation $Y^{\ast}=W^{\ast}H^{\ast}$, together with Lemma~\ref{lem:core-lemma}, we then have
\[
T_4-20N^2=93\,270\,560.551 \leq \|X-Y^{\ast}\|_F^2 \leq T_4=T=93\,271\,280.551.
\]
Using a heuristic algorithm~\cite{vandaele2016heuristics}, we were able to obtain nonnegative
factors $W$ and $H$ such that the product $Y=WH$ is
\begin{equation}\label{eq:solexample}
Y\approx
\begin{pmatrix}
1080.748&0.855&1080.748&1080.752&0.855&1080.748&4320.000\\
0.855&0.857&0.855&0.855&0.857&0.855&4320.857\\
1080.748&0.855&1080.749&1080.752&0.855&1080.748&4320.000\\
1080.752&0.855&1080.752&1080.756&0.855&1080.752&4320.000\\
0.855&0.857&0.855&0.855&0.857&0.855&4320.857\\
1080.748&0.855&1080.748&1080.752&0.855&1080.748&4320.000\\
4320.000&4320.857&4320.000&4320.000&4320.857&4320.000&21772800.000
\end{pmatrix}.
\end{equation}
This numerical factorization has an objective value of
\[
\|X-Y\|_F^2\approx 93\,271\,277.58<T.
\]
In fact, the numerical error is approximately $2.97$ below $T$.
Thus, as discussed in Lemma~\ref{lem:core-lemma}, optimizing both rank-one terms can slightly decrease the objective compared with the approximation obtained by keeping $vv^\top$ fixed.

We now examine the decomposition used in the proof of
Lemma~\ref{lem:core-lemma}. Writing
\[
Y=
\begin{pmatrix}D&p \\ q^\top&\delta\end{pmatrix},
\]
we form
\[
K=\frac{pq^\top}{\delta},\qquad S=D-K,\qquad R=\frac1\alpha\mathbf{1}_{N\times N}-K.
\]
For the solution~\eqref{eq:solexample}, we obtain
\[
\beta\approx 2.9364,\qquad\text{ and }\qquad\|R\|_F\approx9.62\times10^{-4}.
\]
Hence the last row and column of $Y$ induce a leading rank-one block that is very close to $\frac1\alpha\mathbf{1}_{N\times N}$, as predicted by Lemma~\ref{lem:residual-lemma}.
The remaining rank-one correction is
\[
S=D-K\approx
\begin{pmatrix}
1079.891&-0.002&1079.891&1079.895&-0.002&1079.891\\
-0.002&0&-0.002&-0.002&0&-0.002\\
1079.891&-0.002&1079.891&1079.895&-0.002&1079.891\\
1079.895&-0.002&1079.895&1079.899&-0.002&1079.895\\
-0.002&0&-0.002&-0.002&0&-0.002\\
1079.891&-0.002&1079.891&1079.895&-0.002&1079.891
\end{pmatrix}.
\]
The clique structure is now apparent: the entries of $S$ indexed by $\mathcal C\times\mathcal C$, with $\mathcal{C}=\{1,3,4,6\}$, are all approximately $1079.89$, while all entries involving vertices $2$ or $5$ have magnitude below $3\times10^{-3}$.

This is precisely the structure suggested by the rank-one correction associated with the maximum clique: for its normalized indicator,
\[
z=\frac{1}{2}(1,0,1,1,0,1)^\top,
\]
the correction obtained by keeping $vv^\top$ fixed is
\[
\mu(C)zz^\top,\qquad\mu(C)=d-1+\frac{4}{7}\approx4319.5714,
\]
whose nonzero entries are all equal to
\[
\frac{\mu(C)}{4} \approx 1079.8929.
\]
The matrix
$S=D-pq^\top/\delta$ obtained from this numerical factorization is extremely close to the rank-one correction associated with the maximum clique and  reveals the clique $\{1,3,4,6\}$.

\section{Extension to symmetric NMF}
\label{sec:symnmf}
Symmetric NMF (SymNMF) seeks an approximation $X\approx WW^\top$ of a symmetric nonnegative matrix $X$ with $W\geq0$; it is used in particular for graph clustering~\cite{kuang2012symmetric}.
SymNMF is NP-hard when $r$ is part of the input, since checking whether $X$ belongs to the cone of completely positive matrices, that is, whether $X=WW^\top$ for some nonnegative matrix $W$, is NP-hard~\cite{dickinson2014computational}.
Its fixed-rank complexity follows the same pattern as that of NMF (see Section~\ref{subsec:nutshell}).
For $r=1$, the problem is easy: by the Perron--Frobenius theorem, a symmetric nonnegative matrix admits a nonnegative leading eigenvector, so that the best rank-$1$ approximation of the form $ww^\top$, given by the eigendecomposition, is already nonnegative.
For input matrices with negative entries, however, the symmetric rank-$1$ problem is already NP-hard, as shown by Belachew and Gillis through its connection with the maximum clique problem~\cite{belachew2017solving}.
For SymNMF itself, where the input matrix is nonnegative, the first nontrivial case is therefore $r=2$.
In this section, we observe that the reduction of Section~\ref{sec:rank-two-hardness} settles this case without any modification.
We first establish an additional property of the
matrices produced by this reduction.

\begin{proposition}\label{prop:complete-positivity}
The matrix $X$ defined in~\eqref{eq:reduction} is completely positive.
\end{proposition}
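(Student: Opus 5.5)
We will use the decomposition \eqref{eq:decomposition2rank}, $X=vv^\top+\begin{pmatrix}C&0\\0&0\end{pmatrix}$, where $v\geq0$. It therefore suffices to write the block $\begin{pmatrix}C&0\\0&0\end{pmatrix}$ as $UU^\top$ plus nonnegative terms. The difficulty is that $C$ has negative entries on the nonedges, so $C$ itself is not completely positive. We therefore have to reshuffle the decomposition: take part of the diagonal mass $dI_N$ and combine it with the all-ones contribution, leaving a completely positive remainder.

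\textbf{Step 1: split the leading block.} Write the leading block as $dI_N+A=(d-N)I_N+\bigl(NI_N+A\bigr)$. We aim to show
\[
X=\underbrace{\begin{pmatrix}NI_N+A & d\mathbf 1_N\\ d\mathbf 1_N^\top & \alpha d^2\end{pmatrix}}_{X_1}+\begin{pmatrix}(d-N)I_N&0\\0&0\end{pmatrix}.
\]
The second term is a nonnegative diagonal matrix, hence completely positive, since $d-N>0$. Complete positivity is closed under sums, so it remains to show that $X_1$ is completely positive.

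\textbf{Step 2: $X_1$ is completely positive.} The matrix $NI_N+A$ is diagonally dominant with nonnegative entries, so it is completely positive. Explicitly,
\[
NI_N+A=\sum_{(i,j)\in E}(e_i+e_j)(e_i+e_j)^\top+\sum_i (N-\deg i)\,e_ie_i^\top,
\]
with $N-\deg i\geq1$. For the border, we use a Schur-complement type argument. We look for $X_1=ww^\top+\begin{pmatrix}M&0\\0&0\end{pmatrix}$ with $w=(a\mathbf 1_N;\,b)\geq0$, $ab=d$, and $b^2=\alpha d^2$. This gives $a^2=d^2/(\alpha d^2)=1/\alpha$ and
\[
M=NI_N+A-\tfrac1\alpha\mathbf 1_{N\times N},
\]
which again has negative off-diagonal entries on nonedges. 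So this naive route fails, and the border must be absorbed differently.

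We will instead not match the corner exactly. Choose $b^2=\gamma\le\alpha d^2$, put the deficit $(\alpha d^2-\gamma)e_{N+1}e_{N+1}^\top$ into a nonnegative diagonal term, and take $a=d/b$. The induced leading block is $a^2\mathbf 1=(d^2/\gamma)\mathbf 1$, which only gets larger as $\gamma$ decreases, so this is worse. Hence the corner must be matched with slack of the opposite sign. We therefore use several vectors for the border: $w_i=\sqrt{c}\,e_i+\sqrt{\alpha/N}\,d\,e_{N+1}$ for $i=1,\dots,N$, with $\sqrt{c}\sqrt{\alpha/N}\,d=d$, that is, $c=N/\alpha$. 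Then
\[
\sum_i w_iw_i^\top=\begin{pmatrix}\frac N\alpha I_N & d\mathbf 1\\ d\mathbf 1^\top & \alpha d^2\end{pmatrix}.
\]
This reproduces the last row and column exactly while using only diagonal mass $N/\alpha<N$ in the leading block, with no off-diagonal contribution.

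\textbf{Step 3: conclude.} With the vectors $w_i$ from Step 2,
\[
X=\sum_i w_iw_i^\top+\begin{pmatrix}(d-\tfrac N\alpha)I_N+A&0\\0&0\end{pmatrix}.
\]
The last block is nonnegative and diagonally dominant, since $d-N/\alpha\geq N>\deg i$. By the edge decomposition in Step 2 it is completely positive. Hence $X$ is a sum of completely positive matrices, and so $X$ is completely positive.

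The main obstacle was Step 2, namely finding a border decomposition that adds no off-diagonal mass to the leading block. Using $N$ separate rank-one terms, one per vertex, resolves it, and the large $d$ supplies the required diagonal slack.
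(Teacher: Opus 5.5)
Your Step 3 is a correct, self-contained proof, but it places the slack differently from the paper.

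The paper starts from the decomposition $dI_N+A=\sum_{(i,j)\in E}(e_i+e_j)(e_i+e_j)^\top+\sum_i(d-\deg(i))e_ie_i^\top$ and appends $e_{N+1}$ to every vector. The weights of the terms touching vertex $i$ sum to $\deg(i)+(d-\deg(i))=d$, so the border $d\mathbf{1}_N$ is reproduced exactly. The corner comes out as $Nd-|E|$, and the deficit $\alpha d^2-Nd+|E|\geq0$ is filled by a multiple of $e_{N+1}e_{N+1}^\top$. The paper thus routes the border through the graph terms and spends slack in the corner.

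You decouple the two parts instead. Your vectors $w_i$ absorb the entire last row, column and corner at the cost of diagonal mass $N/\alpha$ in the leading block. Diagonal dominance of $(d-N/\alpha)I_N+A$ then handles the graph. So you spend slack on the diagonal: you need $d-N/\alpha\geq\deg(i)$, rather than the paper's $d\geq\deg(i)$ and $\alpha d^2\geq Nd-|E|$.

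What each buys: the paper's certificate is purely combinatorial ($0/1$ vectors with integer weights, $|E|+N+1$ terms). Yours is a generic arrow-matrix argument. It does not depend on how the leading block is decomposed, and it applies to any nonnegative border whenever enough diagonal mass remains.

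For a clean write-up, delete Step 1 and the first half of Step 2. The reduction to $X_1$ announced there is never completed, and Step 3 does not need it.
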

\begin{proof}
First consider the leading block $dI_N+A$ of $X$.
Let $D_G=\operatorname{diag}(\deg(1),\ldots,\deg(N))$ denote the degree matrix of $G$.
Recall that the signless Laplacian matrix $D_G+A$ is completely positive, as can be seen from the decomposition:
\[
D_G+A = \sum_{(i,j)\in E}(e_i+e_j)(e_i+e_j)^\top,
\]
where $e_1,\ldots,e_N$ denote the standard basis vectors of $\mathbb{R}^N$.
We can therefore write
\[
dI_N+A=D_G+A+(dI_N-D_G)=\sum_{(i,j)\in E}(e_i+e_j)(e_i+e_j)^\top+\sum_{i=1}^N\bigl(d-\deg(i)\bigr)e_ie_i^\top.
\]
Thus, the leading block of $X$ is completely positive since $d\geq\deg(i)$ for every $i$.
We next extend these rank-one terms to $\mathbb{R}^{N+1}$, preserving the leading block while matching the off-diagonal entries in the last row and column of $X$.
In the following, $e_i$ denotes the $i$th standard basis vector of $\mathbb{R}^{N+1}$.
We define
\[
X^{(1)}=\sum_{(i,j)\in E}(e_i+e_j+e_{N+1})(e_i+e_j+e_{N+1})^\top, \quad X^{(2)}=\sum_{i=1}^N\bigl(d-\deg(i)\bigr)(e_i+e_{N+1})(e_i+e_{N+1})^\top,
\]
and we obtain
\[
X^{(1)}+X^{(2)}=\begin{pmatrix}dI_N+A & d\mathbf{1}_N\\d\mathbf{1}_N^\top & Nd-|E|\end{pmatrix}.
\]
Thus, $X^{(1)}+X^{(2)}$ agrees with $X$ except for its lower-right entry.
Since $\alpha>1$ and $d=20N^3\geq N$, we have $\alpha d^2-Nd+|E|\geq d(d-N)\geq0$.
We can define
\[
X^{(3)}=\bigl(\alpha d^2-Nd+|E|\bigr)e_{N+1}e_{N+1}^\top,
\]
which gives
\[
X=X^{(1)}+X^{(2)}+X^{(3)}.
\]
Each of the matrices $X^{(1)}$, $X^{(2)}$, and $X^{(3)}$ is completely positive by construction.
Hence, $X$ is completely positive.
\end{proof}
Thus, even though completely positive matrices admit exact symmetric nonnegative factorizations, finding a best rank-two symmetric nonnegative approximation remains strongly NP-hard, as shown by the following corollary.
\begin{corollary}
\label{cor:rank2-symnmf}
Given a symmetric matrix $X\in\mathbb{Q}_{\geq0}^{n\times n}$ and a rational threshold $T\geq0$, deciding whether there exists $W\in\mathbb{R}_{\geq0}^{n\times2}$ such that
\[
\|X-WW^\top\|_F^2\leq T
\]
is strongly NP-hard, even when $X$ is completely positive.
\end{corollary}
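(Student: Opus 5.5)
We reuse the reduction of Section~\ref{sec:rank-two-hardness} unchanged: from an instance $(G,J)$ of \textsc{Clique} we build the same pair $(X,T)$ as in~\eqref{eq:reduction}. The matrix $X$ is symmetric, has nonnegative integer entries bounded polynomially in $N$, and is completely positive by Proposition~\ref{prop:complete-positivity}. The threshold $T$ is rational, with numerator and denominator of polynomial size. It is also nonnegative: $T=\|C\|_F^2-\mu_J^2$ with $\mu_J=d-1+\frac{J}{N+1}$, and by Lemma~\ref{lem:optimal-symetric-rank1approx} applied to the feasible point $\lambda=\mu_J$, $x=z$ of the proof of Proposition~\ref{prop:reduction}, it equals a squared Frobenius norm. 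It then suffices to show that $(G,J)$ is a yes-instance of \textsc{Clique} if and only if some $W\in\mathbb{R}_{\geq0}^{(N+1)\times2}$ satisfies $\|X-WW^\top\|_F^2\leq T$.

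For the ``if'' direction we take the factors from the corresponding part of the proof of Proposition~\ref{prop:reduction}. Given a clique of size $J$, they are
\[
W=\begin{pmatrix}v & \sqrt{\mu_J}\,\widetilde z\end{pmatrix}, \qquad H=W^\top,
\]
so they already have the symmetric form. That proof computed $\|X-WW^\top\|_F^2=T$ exactly, and nothing more is needed.

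For the ``only if'' direction we observe that every symmetric factorization $WW^\top$ with $W\geq0$ is a special case of $WH$ with $H=W^\top\geq0$. Hence the feasible set of the symmetric problem is contained in that of \textsc{Rank-2-NMF}. Lemma~\ref{lem:core-lemma} therefore applies verbatim to $WW^\top$. The gap argument in the proof of Proposition~\ref{prop:reduction} then shows that if $\omega(G)\leq J-1$, every such $W$ gives $\|X-WW^\top\|_F^2>T$. Consequently a symmetric solution with error at most $T$ forces $\omega(G)\geq J$.

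Strong NP-hardness follows exactly as in the proof of Theorem~\ref{thm:r2hardness}. The reduction runs in polynomial time, \textsc{Clique} is NP-complete, and all numbers in the instance are polynomially bounded in magnitude and encoding. Complete positivity of the produced instances gives the ``even when'' clause. The only point needing care is that the yes-certificate is genuinely symmetric, and the explicit construction already settles this. We expect no real obstacle: the work is in Lemma~\ref{lem:core-lemma}, which holds for the larger asymmetric feasible set.
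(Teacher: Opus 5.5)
Your proof is correct and follows the paper's argument: you reuse the instance $(X,T)$, take the symmetric yes-certificate with $H=W^\top$ from Proposition~\ref{prop:reduction}, and for the converse view $WW^\top$ as a \textsc{Rank-2-NMF} feasible point with $H=W^\top$. One small slip is in your extra check that $T\geq0$, which the paper does not spell out: the clique vector $z$ exists only for yes-instances, so argue instead directly that $\|C\|_F^2\geq\sum_i C_{ii}^2=N\bigl(d-\frac{1}{\alpha}\bigr)^2\geq\bigl(d-1+\frac{J}{N+1}\bigr)^2$, which holds for every instance.
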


\begin{proof}
Let $(X,T)$ be the instance constructed in Section~\ref{subsec:r2nmf-construction} from an instance $(G,J)$ of \textsc{Clique}, with $n=N+1$.
The matrix $X$ is symmetric, nonnegative, and, by Proposition~\ref{prop:complete-positivity}, completely positive.
Since this construction is carried out in polynomial time, it suffices to show that there exists $W\in\mathbb{R}_{\geq0}^{(N+1)\times2}$ with $\|X-WW^\top\|_F^2\leq T$ if and only if $(G,J)$ is a yes-instance of \textsc{Clique}.

If $(G,J)$ is a yes-instance, the factors constructed in the proof of Proposition~\ref{prop:reduction} satisfy $H=W^\top$ and $\|X-WW^\top\|_F^2=T$.
Conversely, any $W\geq0$ satisfying $\|X-WW^\top\|_F^2\leq T$ gives a feasible rank-$2$ NMF by setting $H=W^\top$, hence Proposition~\ref{prop:reduction} shows that $\omega(G)\geq J$, that is, $(G,J)$ is a yes-instance of \textsc{Clique}.
\end{proof}

\section{Conclusion}

We proved that computing a best rank-$2$ NMF in the Frobenius norm is strongly NP-hard by providing a polynomial-time reduction from \textsc{Clique}.
The hardness already holds for symmetric positive-definite matrices with nonnegative integer entries.
The same reduction also establishes the strong NP-hardness of rank-$2$ symmetric NMF.

We identify two interesting directions for further investigation.
First, if the unconstrained best rank-$2$ approximation $X_2$ is nonnegative, then it is also optimal for the nonnegative problem.
It would therefore be interesting to determine the complexity when $X_2$ has only a fixed number of negative entries, in particular when it has a single negative entry.
Second, our reduction uses input matrices with entries as large as $\alpha d^2=400N^5(N+1)=\Theta(N^6)$.
It is therefore natural to ask whether rank-$2$ NMF remains NP-hard for nonnegative integer input matrices whose entries
are bounded by an absolute constant, and in particular for
binary input matrices.

\section*{Acknowledgment} 
The author thanks Nicolas Gillis for helpful discussions and for providing feedback on several versions of this manuscript.
In this work, AI was used to assist with the development and refinement of proof arguments, especially inequalities~\eqref{eq:deltaR} and~\eqref{eq:term2}.
All mathematical arguments, reported computations, and references retained in the manuscript have been checked by the author. 
The author assumes responsibility for all content.

\newpage

\bibliographystyle{plain}
\bibliography{bib}

\newpage

\appendix

\section{Auxiliary results}
\label{app:auxiliary-results}

\subsection{Symmetrization of nonnegative rank-one approximations}
\label{app:rank1-symmetrization}

The following lemma shows that we can restrict the nonnegative rank-one approximation of a positive semidefinite matrix to be symmetric.

\begin{lemma}[Symmetrization of nonnegative rank-one approximations]
\label{lem:rank1-symmetrization}
Let $C\in\mathbb{R}^{N\times N}$ be symmetric positive semidefinite.
The problem
\[
\min_{\substack{Z\in\mathbb{R}_{\geq0}^{N\times N}\\\operatorname{rank}(Z)\leq1}} \|C-Z\|_F^2
\]
admits an optimal solution of the form $\lambda xx^\top$, where $\lambda\geq0$, $x\geq0$, and $\|x\|_2=1$.
\end{lemma}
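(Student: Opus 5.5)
We need to show that for $C\succeq0$ symmetric, the best nonnegative rank-one approximation can be taken of the form $\lambda xx^\top$. The plan is to compare an arbitrary nonnegative rank-one matrix with a symmetric one built from the better of its two factors, using positive semidefiniteness through Cauchy--Schwarz, exactly as in the proof of Lemma~\ref{lem:rank1negative}.

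\textbf{Step 1 (existence and reduction).} Any nonnegative matrix $Z$ of rank at most one can be written $Z=\sigma uw^\top$ with $\sigma\geq0$, $u,w\geq0$ and $\|u\|_2=\|w\|_2=1$; if $Z\neq0$, a rank-one nonnegative matrix has a column and a row pattern of constant sign, which can be flipped to be nonnegative. Then
\[
\|C-Z\|_F^2=\|C\|_F^2-2\sigma\, u^\top Cw+\sigma^2 .
\]
Minimizing over $\sigma\geq0$ gives the value $\|C\|_F^2-\bigl((u^\top Cw)^+\bigr)^2$. Hence the optimal value of the problem is $\|C\|_F^2-\bigl(\max_{u,w}(u^\top Cw)^+\bigr)^2$, the maximum being taken over nonnegative unit vectors. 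This maximum is attained because the feasible set is compact, so a minimizer exists.

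\textbf{Step 2 (symmetrization).} Since $C\succeq0$, Cauchy--Schwarz in the semi-inner product induced by $C$ gives
\[
u^\top Cw\leq\sqrt{u^\top Cu}\,\sqrt{w^\top Cw}\leq\max\bigl(u^\top Cu,\,w^\top Cw\bigr)\leq\mu(C).
\]
Here $\mu(C)\geq0$, with $\mu(C)$ as in~\eqref{eq:mu_definition}. The maximum in~\eqref{eq:mu_definition} is attained at some nonnegative unit vector $x$ by compactness, and the pair $u=w=x$ achieves $u^\top Cw=\mu(C)$. Therefore $\max_{u,w}(u^\top Cw)^+=\mu(C)$, attained by a symmetric pair.

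\textbf{Step 3 (conclusion).} Take $\lambda=x^\top Cx=\mu(C)\geq0$. By Lemma~\ref{lem:optimal-symetric-rank1approx}, $\lambda xx^\top$ has error $\|C\|_F^2-\mu(C)^2$. This equals the optimal value from Step 1, so $\lambda xx^\top$ is an optimal solution of the required form.

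The only delicate point is the parametrization in Step 1, namely that every nonnegative rank-one matrix factors with nonnegative vectors; this is routine, since $Z=ab^\top$ with $Z\neq0$ forces $a$ and $b$ to each have entries of a single sign on their supports. The core inequality in Step 2 is the same one used in Lemma~\ref{lem:rank1negative}.
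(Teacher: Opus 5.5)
Your proof is correct, but it takes a somewhat different route from the paper's.

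The paper starts from an optimal $Z=\lambda xy^\top$ and uses the exact identity
\[
\|C-\lambda xy^\top\|_F^2=\tfrac12\|C-\lambda xx^\top\|_F^2+\tfrac12\|C-\lambda yy^\top\|_F^2+\lambda(x-y)^\top C(x-y).
\]
So at the \emph{same} scale $\lambda$, the better of $\lambda xx^\top$ and $\lambda yy^\top$ is no worse. Positive semidefiniteness enters only through $(x-y)^\top C(x-y)\geq0$, which is the arithmetic-mean bound $2x^\top Cy\leq x^\top Cx+y^\top Cy$.

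You instead eliminate the scale first. This reduces the problem to maximizing the bilinear form $u^\top Cw$ over pairs of nonnegative unit vectors. You then use the geometric-mean (Cauchy--Schwarz) bound to show that this maximum equals $\mu(C)$ and is attained at a symmetric pair.

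The paper's argument is shorter and local: it symmetrizes any feasible rank-one matrix without computing the optimal value. However, it tacitly assumes that an optimal $Z$ exists, which is routine but not stated. Your version proves existence explicitly by compactness. It also yields the optimal value $\|C\|_F^2-\mu(C)^2$ directly, so it establishes Lemma~\ref{lem:rank1-symmetrization} and~\eqref{eq:optimal-residual-error} together. The price is the extra parametrization step for nonnegative rank-one matrices.
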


\begin{proof}
Let $Z$ be an optimal solution, and write $Z=\lambda xy^\top$, where $\lambda\geq0$, $x,y\geq0$, and $\|x\|_2=\|y\|_2=1$.
Since $C$ is symmetric, by expanding the squared norms, we can show that
\[
\|C-\lambda xy^\top\|_F^2 =  \frac{1}{2}\|C-\lambda xx^\top\|_F^2 +\frac{1}{2}\|C-\lambda yy^\top\|_F^2+\lambda(x-y)^\top C(x-y).
\]
The final term is nonnegative because $C\succeq0$ and $\lambda\geq0$.
Since the minimum of two quantities does not exceed the average, we have that
\[
\min\left\{\|C-\lambda xx^\top\|_F^2,\|C-\lambda yy^\top\|_F^2\right\}\leq\|C-\lambda xy^\top\|_F^2.
\]
Thus, replacing $\lambda xy^\top$ by the better of the two rank-$1$ matrices $\lambda xx^\top$ and $\lambda yy^\top$ preserves optimality.
\end{proof}

\end{document}